\newcommand{\pa}{\partial}
\newcommand{\D}{\mathcal{D}}
\newcommand{\E}{\mathcal{E}}
\newcommand{\Vol}{\textrm{Vol }}
\newcommand{\dn}{\mathcal{N}} % Neumann map
\newcommand{\har}{\mathcal{H}} % Harmonic extension
\newcommand{\A}{\mathcal{A}} % some bounded quantites
\newcommand{\tn}{\overline{\nabla}} % boundary derivative
\newcommand{\R}{\mathbb{R}}
\newcommand{\N}{N} % normal to the boundary
\newcommand{\K}{\mathcal{K}} % Biot-Savart kernel
\newcommand{\ve}{\varepsilon}
\newcommand{\dist}{\textrm{dist}_g}
\let\div\relax
\DeclareMathOperator{\curl}{curl}
\DeclareMathOperator{\div}{div}
\numberwithin{equation}{section}
\newtheorem{theorem}{Theorem}
\newtheorem{prop}{Proposition}
\newtheorem{cor}{Corollary}
\newtheorem{lemma}{Lemma}
\theoremstyle{remark}
\theoremstyle{definition}
\title{On the breakdown of solutions to the incompressible Euler equations
with free surface boundary}
\author{Daniel Ginsberg}\address{Johns Hopkins University, Department of Mathematics, 3400 N.\@ Charles St., Baltimore, MD 21218, USA}
\email{dginsbe5@math.jhu.edu}
\date{\today}
\begin{document}
\maketitle
\vspace{-0.25in}
\nocite{*}
\begin{abstract}
  We prove a continuation critereon for incompressible liquids with free
  surface boundary. We combine the energy estimates of Christodoulou and
  Lindblad with an analog of the estimate due to Beale, Kato, and Majda for the
  gradient of the velocity in terms of the vorticity, and use this to show
  solution can be continued so long as the second fundamental form and
  injectivity radius of the free boundary, the vorticity, and
 one derivative of the velocity on the free boundary
remain bounded, assuming that the Taylor sign condition holds.
\end{abstract}
\section{Introduction}

We consider the equations of motion for an incompressible, bounded
fluid body with velocity $u = (u_1, u_2, u_3)$ and pressure $p$:
\begin{alignat}{2}
 (\pa_t + u^k\pa_k)u_i &= -\pa_i p, &&\quad i = 1,.., 3,  \textrm{ in } \D,\label{mom}\\
 \div u = \pa_i u^i &= 0, && \quad\textrm{ in } \D.
 \label{mass}
\end{alignat}
Here, we are summing over repeated upper and lower indices and writing
$\pa_t = \frac{\pa}{\pa t}$ and $\pa_k = \frac{\pa}{\pa x^k}$
as well as
$u^i = \delta^{ij}u_j$.
The domain $\D = \bigcup_{0\leq t \leq T} \{t\} \times \D_t$
is to be determined, and satisfies:
\begin{alignat}{3}
 p = 0 \textrm{ on } \pa \D_t, &&\quad
 p > 0 \textrm{ in } \D_t,&&\quad
 (\pa_t + u^k\pa_k) \big|_{\pa \D} \in
 T(\pa \D),
 \label{bc}
\end{alignat}
with $\D_0$ diffeomorphic to the unit sphere.

The problem \eqref{mom}-\eqref{bc} is ill-posed in Sobolev spaces unless the
following condition holds, known as the ``Taylor sign condition'' (see \cite{Ebin1987}):
\begin{alignat}{3}
  (-\nabla_N p)(t,\cdot) > 0 &&\quad \textrm{ on } \pa \D_t
  &&\quad \text{ where } \nabla_N = N^i \nabla_i.
 \label{tsc}
\end{alignat}

This and related problems have been studied by a wide variety of authors. In the
case that $\D_0$ is diffeomorphic to the lower half plane and the vorticity
$\omega = \curl v$ vanishes, this is known as the irrotational water waves problem.
The literature on this
problem is vast; let us just single out \cite{Germain2009}, \cite{Wu2010}
and \cite{Ionescu2016}, where the authors proved that in two and three dimensions, the
water waves problem is globally well-posed in Sobolev spaces.
See \cite{Ionescu2018} for a comprehensive survey of this problem. See also
\cite{Ifrim2015}, where the authors considered the water waves problem
in two dimensions with constant vorticity and proved that the system
has a cubic lifespan, and \cite{Bieri2017}, where the authors
considered the free boundary problem in two dimensions with self-gravitation
but no vorticity and also proved a cubic lifespan bound.

In the case that $\D_0$ is a bounded domain and $\omega \not = 0$,
Lindblad and Christodoulou
proved energy estimates in \cite{Christodoulou2000}. Local
well-posedness was then shown in this case by Lindblad in \cite{Lindblad2003}
using a Nash-Moser iteration and later by Shkoller and Coutand in \cite{CS07}
using a tangential smoothing operator.

We now describe the energy estimates developed by Lindblad and Christodoulou.
We let $\N_i$ denote the unit conormal to $\pa \D_t$ and
let $\Pi_{i}^j = \delta_i^j - \N_i \N^j$ denote the
projection to the tangent space at the boundary.
Writing
$\nabla$ for the covariant derivative on $\D_t$,
we let $\theta$ denote the second fundamental form of $\pa \D_t$:
\begin{equation}
 \theta_{ij} = \Pi_{i}^k \Pi_j^\ell \nabla_k N_\ell
 \label{}
\end{equation}

The energies in \cite{Christodoulou2000} are of the form:
\begin{align}
 \E_r(t) = \int_{\D_t} \delta^{ij} Q(\nabla^r u_i, \nabla^r u_j)
 + \int_{\pa\D_t} Q(\nabla^r p, \nabla^r p) |\nabla p|\, dS
 + \int_{\D_t} |\curl \nabla^{r-1} u|^2,
 \label{enint}
\end{align}
where $Q$ is a quadratic form which is the norm of the tangential
components, $Q(\beta,\beta) = |\Pi \beta|^2$ when restricted to the boundary,
and the full norm in the interior.

Let $\iota_0$ denote the injectivity radius of the normal exponential
map. By definition this is the smallest number $\iota$ so that the map:
\begin{equation}
 (x,\iota')
  \to  x + \iota' N(x) \in \D_t
 \label{}
\end{equation}
defined for $x \in \pa \D_t, \iota' \in (-\iota,\iota)$.
is injective.  Thus $\iota_0$ measures how far $\pa \D_t$
is from self-intersecting.
The main theorem in \cite{Christodoulou2000} is:
\begin{theorem}
  \label{LCthm}
  Let $(u, \D)$ be a smooth solution to \eqref{mom}-\eqref{bc}
  for $0 \leq t \leq T$.
 Suppose that the following bounds hold:
 \begin{alignat}{2}
  |\nabla u| + |\nabla p| &\leq M &&\quad \textrm{ in } \D_t,\label{bl1}\\
  |\theta| + \frac{1}{\iota_0} &\leq K, &&\quad \textrm{ on } \pa \D_t,\label{bl2}\\
  -\nabla_Np \geq \delta &> 0 , &&\quad \textrm{ on } \pa \D_t,\label{bl3}\\
  |\nabla^2 p| + |\nabla D_t p| &\leq L &&\quad \textrm{ on } \pa \D_t.
  \label{bl4}
\end{alignat}
 Then there are continuous functions $F_r$ with $F_r|_{t = 0} = 1$ so that:
 \begin{align}
  \E_r(t) \leq F_r(t, M, K, \delta^{-1}, L, \E_0(0),..., \E_{r-1}(0))
  \E_r(0)
  \label{}
 \end{align}
\end{theorem}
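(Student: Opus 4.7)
My plan is to differentiate $\E_r(t)$ in time, derive an inequality of the Gr\"onwall form
\[
 \frac{d}{dt}\E_r(t) \leq C\bigl(M, K, \delta^{-1}, L, \E_0(t),\ldots, \E_{r-1}(t)\bigr)\, \E_r(t)
\]
for some continuous $C$, and then induct on $r$, the base case being handled directly by energy and volume conservation. Writing $D_t = \pa_t + u^k \pa_k$ for the material derivative and using $\div u = 0$, the Reynolds transport identity gives $\frac{d}{dt}\int_{\D_t} f\, dx = \int_{\D_t} D_t f \, dx$; for integrals over $\pa \D_t$ one picks up boundary contributions proportional to $\theta$ and to tangential derivatives of the normal velocity, both controlled by \eqref{bl1}--\eqref{bl2}.

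To handle the interior part of $\E_r$, I would commute $\nabla^r$ through \eqref{mom} to obtain $D_t \nabla^r u_i = -\nabla_i \nabla^r p + R_i$, where the remainder $R_i$ consists of commutator products in which at most one factor is of top order and all other factors are pointwise controlled by $M$ via \eqref{bl1}, so that $\|R\|_{L^2}$ is bounded by a constant depending on $M$ times $\sqrt{\E_r(t)}$ plus lower-order energies. Pairing against $Q(\nabla^r u,\,\cdot\,)$, integrating by parts, and using $\div u = 0$ reduces the interior pressure contribution to a boundary integral on $\pa \D_t$. The structural fact that closes the scheme is that $p \equiv 0$ on $\pa \D_t$ forces all tangential derivatives of $p$ to vanish there, so the tangential part of $\nabla^{r-1} p|_{\pa \D_t}$ can be expressed in terms of $\theta$ and $\nabla_N p$. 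This lets one rewrite the leading surface term, up to controllable errors, as a time derivative of $\int_{\pa \D_t} Q(\nabla^r p, \nabla^r p)|\nabla p|\, dS$, whose coercivity is precisely the content of the Taylor-sign lower bound \eqref{bl3}. The curl component of $\E_r$ evolves autonomously via the vorticity equation $D_t \omega = (\omega\cdot\nabla)u$, from which the pressure drops because $\curl \nabla p = 0$; only \eqref{bl1} is needed for those commutators.

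The main obstacle I anticipate is a Hodge-type coercivity step. The procedure above gives control on the tangential part of $\nabla^r u$ on $\pa \D_t$ (through $Q$), on $\div \nabla^{r-1} u = 0$ in $\D_t$, and on $\curl \nabla^{r-1} u$ (which sits in $\E_r$ explicitly), and one must upgrade this to a full $L^2$ bound on $\nabla^r u$ with constants depending only on $K$ and the lower-order energies. Here the geometric hypothesis \eqref{bl2} is essential: $1/\iota_0 \leq K$ furnishes a tubular neighborhood of $\pa \D_t$ in which tangential and normal directions can be systematically decoupled, while $|\theta| \leq K$ controls the Christoffel symbols produced by commuting covariant derivatives. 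The bound \eqref{bl4} enters when differentiating the weight $|\nabla p|$ and the surface measure in time. Once the interior, surface, and curl estimates are assembled into the displayed differential inequality, Gr\"onwall and induction on $r$ complete the argument.
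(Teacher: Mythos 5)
Your outline follows the Christodoulou--Lindblad energy estimate strategy, which the paper itself cites from \cite{Christodoulou2000} rather than proves; what the paper does reproduce is a modified version of that argument in Section~\ref{enestsec}, with the pointwise bounds $M,L$ replaced by the quantity $\A$. At the level of strategy your plan is the right one: Reynolds transport, commuting $\nabla^r$ through \eqref{mom}, integration by parts, the $p\equiv 0$ structure turning $\Pi\nabla^r p$ into $\theta\otimes\nabla_N p$ plus lower order, the div--curl--tangential coercivity \eqref{pwlem}, and the vorticity equation for the curl part.

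Two points deserve tightening. First, the weight in the surface energy must be $|\nabla p|^{-1}=(-\nabla_N p)^{-1}$, as in \eqref{endef}, not $|\nabla p|$ as in the misprint \eqref{enint} you copied. This sign and power are what make the mechanism you describe actually close: the boundary term produced by integrating $\int_{\D_t}Q(\nabla\nabla^r p,\nabla^r u)$ by parts, namely $\int_{\pa\D_t}(\Pi\nabla^r p)\cdot\Pi(N_k\nabla^r u^k)\,dS$, cancels against the leading commutator $-(\nabla_N p)\,\Pi(N_k\nabla^r u^k)$ hiding inside $\Pi D_t\nabla^r p$ (coming from $\nabla_k p=N_k\nabla_N p$ on $\pa\D_t$) precisely when the weight is $(-\nabla_N p)^{-1}$; with the weight $|\nabla p|$ the powers of $\nabla_N p$ do not match and the scheme does not close. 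Second, your description of the commutator remainder as a product in which all non--top-order factors are pointwise controlled by $M$ holds literally only for the extreme indices $s=0,r-1$ in the sum on the right of \eqref{hotdeul}; the intermediate terms are $(\nabla^{1+s}u)(\nabla^{r-s}u)$ with both factors of intermediate order and require Gagliardo--Nirenberg interpolation such as \eqref{intinterpu} to reduce to $M$ times $\sqrt{\E_r}$ plus lower-order energies. Neither is a gap in the idea, only in its execution.
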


By the local well-posedness results in e.g. \cite{L05a} or \cite{CS07}, Theorem
\ref{LCthm} and an approximation argument to go from smooth solutions
to solutions in Sobolev spaces, this gives the following breakdown critereon for solutions to
\eqref{mom}-\eqref{bc}:
\begin{cor}
  Let $(u,\D)$ be a solution to \eqref{mom}-\eqref{bc}
  with:
  \begin{alignat}{3}
   u(t) \in H^s(\D_t), &&\quad \D_t \in H^{s+1/2}(\R^3), &&\quad 0 \leq t \leq T.
   \label{uureg}
 \end{alignat}
  for $s \geq 3$. Suppose that
  $T^*$ is the largest time so that $u$ can be continued as a solution to
  \eqref{mom}-\eqref{bc} in the class \eqref{uureg}. Then either
  $T^* = \infty$ or at least one of the quantities on the left-hand sides of
  \eqref{bl1}, \eqref{bl2}, \eqref{bl4} go to infinity,
  or the quantity on the left-hand side of \eqref{bl3} goes to
  zero as $t \nearrow T^*$.
\end{cor}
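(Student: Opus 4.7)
The plan is to argue by contradiction: suppose $T^*<\infty$ and that the left-hand sides of \eqref{bl1}, \eqref{bl2}, \eqref{bl4} stay bounded by constants $M,K,L$ on $[0,T^*)$ while the left-hand side of \eqref{bl3} stays bounded below by $\delta>0$, and then extend the solution past $T^*$ to contradict the maximality of $T^*$. The engine of the argument is Theorem~\ref{LCthm}: once the bounds \eqref{bl1}--\eqref{bl4} hold uniformly, the energies $\E_r(t)$ are controlled by $F_r(T^*,M,K,\delta^{-1},L,\E_0(0),\dots,\E_{r-1}(0))\E_r(0)$ for all $t<T^*$.

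First I would promote Theorem~\ref{LCthm} from smooth to $H^s$ solutions by an approximation argument. Given initial data $(u_0,\D_0)$ with $u_0\in H^s(\D_0)$ and $\D_0\in H^{s+1/2}(\R^3)$, regularize to obtain $(u_0^{(n)},\D_0^{(n)})$ smooth with $u_0^{(n)}\to u_0$ in $H^s$ and $\D_0^{(n)}\to \D_0$ in $H^{s+1/2}$, and so that $\E_r(0)$ for the approximants converges to the original $\E_r(0)$ for $r\leq s$. By local well-posedness (\cite{L05a} or \cite{CS07}) these produce smooth solutions $(u^{(n)},\D^{(n)})$ on a common time interval, to which Theorem~\ref{LCthm} applies. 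Continuity of the $F_r$ and the fact that the quantities $M,K,\delta,L$ for the approximants converge to those of the genuine solution yield uniform bounds $\E_r(u^{(n)},\D^{(n)})(t)\leq C_r$ on $[0,T^*)$; passing to the limit $n\to\infty$ gives the same bound for $(u,\D)$ itself.

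Next I would convert the $\E_r$ bounds into $H^s$ control on the state at times approaching $T^*$. Using that $Q$ is the full norm in the interior plus the tangential norm on $\pa\D_t$, together with the boundary regularity information already contained in the curvature bound $|\theta|\leq K$ and the lower bound $-\nabla_N p\geq\delta$, elliptic/Hodge-type estimates (as used in \cite{Christodoulou2000}) convert $\sum_{r\leq s}\E_r(t)$ into uniform bounds on $\|u(t)\|_{H^s(\D_t)}$ and on $\|\pa\D_t\|_{H^{s+1/2}}$. Parametrizing $\pa \D_t$ over $\pa\D_0$ via the Lagrangian flow (which is regular because $u,\nabla u$ are bounded), I would take a sequence $t_n\nearrow T^*$, extract a limit by weak compactness, and verify that the limit $(u(T^*),\D_{T^*})$ still has $u(T^*)\in H^s(\D_{T^*})$, $\D_{T^*}\in H^{s+1/2}$, that $\D_{T^*}$ is diffeomorphic to the sphere (using the injectivity radius bound), and that the Taylor sign condition continues to hold by virtue of \eqref{bl3}.

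Finally, with valid $H^s$ data at time $T^*$ satisfying the Taylor condition, the local well-posedness theorem produces a solution on $[T^*,T^*+\tau]$ for some $\tau>0$, which can be concatenated with the original solution to extend it strictly beyond $T^*$, contradicting the choice of $T^*$. The main technical obstacle I expect is the approximation/limit step: checking that the smooth regularizations can be chosen so the hypotheses $M,K,L,\delta$ transfer with the right quantitative constants, and in particular that the passage $t\to T^*$ preserves the non-self-intersection of the boundary and the positivity in \eqref{bl3}; the remaining pieces are essentially bookkeeping built on Theorem~\ref{LCthm} and the cited local well-posedness results.
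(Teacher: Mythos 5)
Your proposal is correct and takes essentially the same route the paper (very briefly) indicates: Theorem~\ref{LCthm} gives uniform control of the energies $\E_r$ so long as the quantities in \eqref{bl1}--\eqref{bl4} stay bounded/bounded away from zero, an approximation argument transfers this from smooth solutions to the class \eqref{uureg}, and local well-posedness from \cite{L05a} or \cite{CS07} then lets one restart at $T^*$ to contradict maximality. The paper states this only as a one-sentence remark before the corollary, so your write-up simply fills in the expected bookkeeping along that same path.
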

Here, $\D_t \in H^{s+1/2}(\R^3)$ means that locally, $\D_t$ can be written
as the graph of a function in $H^{s+1/2}(\R^3)$.
Our result is that the above breakdown condition can be replaced with a condition
on the vorticity $\omega$ and some norms of $u$ on the free boundary $\pa \D_t$.
Our approach follows the seminal
article \cite{Beale1984}, where Beale, Kato, and Majda
consider
the incompressible Euler equations in $\R^3$:
\begin{alignat}{2}
 (\pa_t  + u^k\pa_k)u_i &= -\pa_i p, && \quad \textrm { in } [0,T]\times \R^3,
 \label{u3}\\
 \div u &= 0, && \quad \textrm{ in } [0,T] \times \R^3.
 \label{u32}
\end{alignat}
with solutions in the following space:
\begin{equation}
 u \in C([0,T]; H^3(\R^3)) \cap C^1([0,T]; H^{2}(\R^3)).
 \label{ureg}
\end{equation}
The show that if $T^* < \infty$ is the first time for which the solution to \eqref{u3}-\eqref{u32}
cannot be continued beyond $T = T^*$ in the class \eqref{ureg}, then:
\begin{align}
 \int_0^{T^*} ||\curl v(t, \cdot)||_{L^\infty(\R^3)} = \infty
 \label{bkm}
\end{align}
 See also
\cite{Ferrari1993} for a generalization to the case of a fixed, bounded
domain with a Neumann boundary condition.

We now describe our main theorem.
We write $\dn$ for the Dirichlet-to-Neumann operator on $\pa \D_t$;
if $\psi :\pa\D_t \to \R$ and $\psi_\har: \D_t \to \R$ denotes the harmonic
extension of $\psi$ to $\D_t$, then:
\begin{equation}
 \dn \psi = \big(\N\cdot \nabla \psi_\har\big)\big|_{\pa \D_t}.
 \label{}
\end{equation}
We will also write $U = u|_{\pa \D_t}$.
Our main result is then:
\begin{theorem}
  \label{mainthm}
  Let $u$ be a solution to \eqref{mom}-\eqref{mass} in the class
  \eqref{uureg} with $s > 3$. Define:
  \begin{align}
   \A(t) &= ||\omega(t)||_{L^\infty(\D_t)} + ||\nabla u(t)||_{L^\infty(\pa\D_t)}
   + ||\dn U(t)||_{L^\infty(\D_t)},\\
   \mathcal{K}(t) &= ||\theta(t)||_{L^\infty(\pa \D_t)} + \frac{1}{\iota_0(t)}
   + ||(\nabla_N p(t))^{-1}||_{L^\infty(\pa \D_t)}.
   \label{}
  \end{align}
  Suppose that there is a time $T^*$ so that
  the solution cannot be continued past $T^*$
  in the class \eqref{uureg} and that $T^*$ is the
  first such time. Then either:
  \begin{align}
   \limsup_{t \nearrow T^*}\, \mathcal{K}(t) = \infty,
   \label{}
  \end{align}
  or
  \begin{align}
   \int_0^{T^*}  \big(\A(t)\big)^2
   + ||\nabla_N D_t p(t)||_{L^\infty(\pa\D_t)} \, dt = \infty.
   \label{blowup}
  \end{align}
  In particular, if \eqref{blowup} occurs, then:
  \begin{align}
   \limsup_{t \nearrow T^*} \A(t) + ||\nabla_N  D_tp(t)||_{L^\infty(\pa\D_t)} = \infty.
   \label{pwblow}
  \end{align}
\end{theorem}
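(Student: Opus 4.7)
The plan is to reduce Theorem \ref{mainthm} to the Christodoulou--Lindblad continuation criterion \ref{LCthm}. Boundedness of $\K(t)$ immediately controls $|\theta| + 1/\iota_0$ and $-\nabla_N p \geq 1/\K$, delivering \eqref{bl2} and \eqref{bl3}. Hence the task reduces to bounding the quantities $M$ of \eqref{bl1} and $L$ of \eqref{bl4}, namely $\|\nabla u\|_{L^\infty(\D_t)}$, $\|\nabla p\|_{L^\infty(\D_t)}$, $\|\nabla^2 p\|_{L^\infty(\pa \D_t)}$, and $\|\nabla D_t p\|_{L^\infty(\pa \D_t)}$, by quantities involving only $\K$, $\A$, $\|\nabla_N D_t p\|_\infty$ and the regularity controlled by $\E_r$, with enough quantitative dependence for a Gronwall-type closure using \eqref{blowup}.

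The pressure bounds reduce to controlling $\nabla u$ and the geometry. Since $p$ satisfies $\Delta p = -(\pa_i u^j)(\pa_j u^i)$ in $\D_t$ with $p = 0$ on $\pa \D_t$, standard $L^\infty$ elliptic estimates with constants depending only on $\K$ give $\|\nabla p\|_{L^\infty(\D_t)} \lesssim_\K \|\nabla u\|_{L^\infty(\D_t)}^2$. For $\|\nabla^2 p\|_{L^\infty(\pa \D_t)}$, the identity $p \equiv 0$ on $\pa \D_t$ forces $\tn^2 p = -\theta\,\nabla_N p$, while the boundary Gauss decomposition $\Delta p = \nabla_N^2 p + (\mathrm{tr}\,\theta)\nabla_N p + \Delta_{\pa \D_t}(p|_{\pa \D_t})$ combined with $p|_{\pa \D_t} = 0$ recovers $\nabla_N^2 p = \Delta p - (\mathrm{tr}\,\theta)\nabla_N p$; both components are thus controlled by $\K$ and $\|\nabla u\|_\infty^2$. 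Finally, because $D_t$ is tangent to the space-time boundary \eqref{bc} and $p \equiv 0$ there, $D_t p \equiv 0$ on $\pa \D_t$, so its tangential derivatives vanish identically and $|\nabla D_t p|_{\pa \D_t} = |\nabla_N D_t p|$, which is supplied by the hypothesis in \eqref{blowup}.

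The main analytic content and expected chief obstacle is a Beale--Kato--Majda-type logarithmic estimate
\begin{equation}
 \|\nabla u\|_{L^\infty(\D_t)} \lesssim_\K \A(t)\bigl(1 + \log(e + \E_r(t))\bigr) + \text{l.o.t.}
\end{equation}
I would prove it by decomposing $u = h + w$, where $h$ is the componentwise harmonic extension to $\D_t$ of $U = u|_{\pa \D_t}$, and $w = u - h$ satisfies $\Delta w = -\curl \omega$ in $\D_t$ with $w = 0$ on $\pa \D_t$. Since each component of $\nabla h$ is harmonic, the maximum principle gives $\|\nabla h\|_{L^\infty(\D_t)} \leq \|\nabla h\|_{L^\infty(\pa \D_t)}$, and on the boundary the splitting $\nabla h = (\dn U)\otimes N + \tn U$ bounds this by $\|\dn U\|_{L^\infty(\pa \D_t)} + \|\nabla u\|_{L^\infty(\pa \D_t)}$, precisely the two boundary summands in $\A$. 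For $w$, a Green's function representation on $\D_t$, after integrating by parts to move a derivative off $\omega$, yields a domain Biot--Savart kernel applied to $\omega$, whose gradient is a classical Calder\'on--Zygmund-type singular integral; this gives $\|\nabla w\|_{L^\infty(\D_t)}$ bounded by $\|\omega\|_\infty$ with a logarithmic loss in $\E_r$, in the spirit of \cite{Beale1984} and the fixed-domain adaptation of \cite{Ferrari1993}. Uniform control of this Green's function and its derivatives requires the second fundamental form bounded and the normal injectivity radius bounded below, i.e., exactly what $\K$ provides; making the geometric constants quantitatively uniform over the moving domain is the most delicate part.

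Combining with Theorem \ref{LCthm} and the Sobolev embedding $\|u\|_{H^s(\D_t)}^2 \lesssim_\K \E_r(t)$ yields a differential inequality of the form
\begin{equation}
 \frac{d}{dt}\log\bigl(e + \E_r(t)\bigr) \lesssim_\K \A(t)^2 + \|\nabla_N D_t p(t)\|_{L^\infty(\pa \D_t)} + \bigl(\A(t) + \text{l.o.t.}\bigr)\log\bigl(e + \E_r(t)\bigr),
\end{equation}
the appearance of $\A^2$ tracing back to the quadratic pressure estimate $\|\nabla p\|_\infty \lesssim \|\nabla u\|_\infty^2$. An Osgood/Gronwall argument in the style of \cite{Beale1984} then keeps $\E_r$ finite on $[0, T^*)$ so long as $\K$ stays bounded and \eqref{blowup} fails, contradicting maximality of $T^*$ in the class \eqref{uureg}. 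The pointwise statement \eqref{pwblow} follows by contrapositive: if $\A + \|\nabla_N D_t p\|$ were bounded on the finite interval $[0, T^*)$, the integral in \eqref{blowup} would be finite.
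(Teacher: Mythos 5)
The broad strategy is the same as the paper's: decompose $u$ into a harmonic extension of the boundary trace plus a vorticity part (your $h + w$ is exactly the paper's $u_1 + u_0$ of \eqref{u0def}--\eqref{u0def2}), control the harmonic part by the maximum principle and the vorticity part by a domain Green's function estimate with a logarithmic loss (Theorem~\ref{nlinprop}), then close with an Osgood-type ODE. Your observations that $\K$ controls \eqref{bl2}--\eqref{bl3}, that $D_t p \equiv 0$ on $\pa\D_t$ forces $|\nabla D_t p|_{\pa\D_t} = |\nabla_N D_t p|$, and that $\Pi\nabla^2 p = \theta\,\nabla_N p$ are all correct and appear in the paper (the last is \eqref{nabla2}).

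There is, however, a genuine gap in the middle step, and it is precisely the technical heart of the paper. You propose to reduce to Theorem~\ref{LCthm} by bounding $M$ and $L$, and in particular you use $\|\nabla p\|_{L^\infty(\D_t)} \lesssim_\K \|\nabla u\|_{L^\infty(\D_t)}^2$. This is true but too crude. Once you substitute the BKM estimate $\|\nabla u\|_{L^\infty} \lesssim \A(1 + \log^+\E_3) + 1$, that pressure bound becomes $\|\nabla p\|_\infty \lesssim \A^2(1 + \log^+\E_3)^2$, so any differential inequality whose coefficient involves $\|\nabla p\|_\infty$ linearly produces a factor $(1 + \log^+\E_3)^2$ multiplying $\E_3$. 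But $\int^\infty dy / \bigl(y(1+\log^+ y)^2\bigr)$ converges, so the Osgood argument no longer yields a contradiction. (Your displayed ODE at the end, with $\A^2$ appearing \emph{without} a logarithm, is inconsistent with your own pressure bound; tracking it honestly gives the $(\log)^2$ term.) Moreover, Theorem~\ref{LCthm} as stated is an \emph{integrated} Gronwall bound with constant coefficients, so it cannot be invoked as a black box in a BKM argument; one needs the differential inequality behind it, and as the paper notes explicitly, the estimates of \cite{Christodoulou2000} are not linear in $\|\nabla u\|_{L^\infty}$ --- that is exactly why Section~\ref{enestsec} re-derives them.

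The fix, which is the content of Lemma~\ref{dtpests} and Proposition~\ref{enestprop}, is to exploit the fact that while $\|\nabla u\|_{L^\infty}$ may only be controlled with a logarithmic loss, the $L^p$ norms $\|\nabla u\|_{L^p(\D_t)}$ for any finite $p$ are bounded directly by $\A$ (inequality \eqref{lplinfty}, a consequence of the $L^p$ elliptic estimate \eqref{curlest} and the maximum principle). This yields the \emph{sharper} bound $\|\nabla p\|_{L^\infty} \lesssim_\K \|\nabla u\|_{L^\infty}\|\nabla u\|_{L^3} \lesssim \|\nabla u\|_{L^\infty}\,\A$ (only one factor of $\|\nabla u\|_\infty$, hence only one logarithm), and more generally lets one keep the coefficient in the energy differential inequality linear in $\|\nabla u\|_{L^\infty}$. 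With that, the ODE takes the form $\frac{d}{dt}\E_3 \lesssim_\K (\A + \A^2 + \|\nabla_N D_t p\|_\infty)\,\E_3(1 + \log^+\E_3)$, which is the \eqref{en1} used in the paper's proof and does close under Osgood. So while your high-level picture is right, the reduction to the Christodoulou--Lindblad black box does not work; you must re-derive the energy estimates with the $L^p$ interpolation trick to keep the dependence on $\|\nabla u\|_{L^\infty}$ linear.
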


The reason that this criterion requires a bound for $\A^2$ instead of
$\A$ is ultimately
due to the fact that the energy estimates we will use require a bound for
$D_t p$ in the interior. Since $\Delta p = -(\nabla^i u_j)(\nabla^j u_i)$,
it follows that
$\Delta D_t p$ is cubic in $u$ (see Lemma \ref{dtpests}).

As in \cite{Beale1984} and \cite{Ferrari1993}, the proof relies on two
ingredients. First, in Section \ref{enestsec} we assume that $(u,\D)$
is a solution to \eqref{mom}-\eqref{bc} in the class
 \eqref{uureg} and that the following bounds hold:
\begin{alignat}{2}
  |\theta(t)| + \frac{1}{\iota_0(t)}
  &\leq K, &&\quad \textrm{ on } \pa\D_t,
  0 \leq t \leq T,\label{assump1}\\
  (-\nabla_N p(t)) \geq \delta &> 0, &&\quad \textrm{ on } \pa \D_t.
  0 \leq t \leq T,
 \label{assump2}
\end{alignat}
Then, with $\E_r$ defined by \eqref{enint}:
\begin{equation}
 \frac{d}{dt} \E_3(t) \leq  C(K,\delta^{-1}) (||\nabla u||_{L^\infty(\D_t)}
 + \A(t) + \big(\A(t)\big)^2 + ||\nabla_N D_t p||_{L^\infty(\pa \D_t)}) \E_3(t),
 \label{enest1}
\end{equation}
and for $s \geq 4$,
\begin{align}
 \frac{d}{dt} \E_s(t) \leq C(K, \delta^{-1}) (||\nabla u||_{L^\infty(\D_t)}
 + \A(t) + \big(\A(t)\big)^2 + ||\nabla_N D_t p||_{L^\infty(\pa \D_t)}) \big(
 \E_s(t) + P(\E_{s-1}(t), ..., \E_0(t))\big).
 \label{enest2}
\end{align}

Write $u = u_0 + u_1$ where:
  \begin{alignat}{2}
   \Delta u_0 &= \curl \omega, &&\quad \textrm{ in } \D_t,\label{u0def}\\
   u_0 &= 0, &&\quad \textrm{ on } \pa \D_t,
   \label{u0def2}
 \end{alignat}
  and $\Delta u_1 = 0$ in $\D_t$, $u_1|_{\pa \D_t} = u|_{\pa \D_t}$.
In section \ref{nlinsec}, we prove the following nonlinear estimate:
\begin{equation}
 ||\nabla u_0(t)||_{L^\infty(\D_t)}
 \leq C(K) \big( (1 + \log^+(||\omega(t)||_{H^2(\D_t)}) ||\omega(t)||_{L^\infty(\D_t)} + 1 \big),
 \label{nlinintro}
\end{equation}
with $\log^+(s) = \max(0, \log(s))$, which relies on properties of the Green's
function for the Dirichlet problem in $\D_t$.

Assuming that \eqref{enest1}-\eqref{enest2} and \eqref{nlinintro} hold
for the moment, we have:
\begin{proof}[Proof of Theorem \ref{mainthm}]
  Write $y_s(t) = \E_s(t), y_s^* = \sum_{k \leq s} \E_{k}$.
  By the maximum principle \eqref{bernstein}, we have
  $||\nabla u_1||_{L^\infty(\D_t)} \leq \A(t)$.
  Therefore, by \eqref{enest1} and \eqref{nlinintro}, we have
  that $y_3(t)$ satisfies the differential inequality:
  \begin{equation}
   \frac{d}{dt} y_3 \leq C(K) \bigg( \A + \A^2
  + ||\nabla_N D_t p||_{L^\infty(\pa \D_t)} \bigg)
   y_3 (1 + \log^+ y_3)
   \label{en1}
  \end{equation}
  and that for $s \geq 4$:
  \begin{equation}
   \frac{d}{dt} y_s \leq C(K) \big(\A + \A^2 +
   ||\nabla_N D_t p||_{L^\infty(\pa \D_t)})
   \big(y_s (1 + \log^+ y_s) + P(y_{s-1}^*)\big).
   \label{}
  \end{equation}

  As in the proof of Proposition 17.2.3 in \cite{Taylor2011}, we now note that
  \eqref{en1} implies:
  \begin{equation}
   \limsup_{t \nearrow T^*} \int_{y_3(0)}^{y_3(t)} \frac{dy}{y (1+ \log^+ y)}
   \leq C(K)\, \limsup_{t \nearrow T^*} \int_0^{t} \A(s) + \big(\A(s)\big)^2
   + ||\nabla_N D_t p(s)||_{L^\infty(\pa \D_s)}\, ds
   \label{contra}
  \end{equation}
  and this is finite, by assumption.

  On the other hand, since $T_*$ is the largest time for which \eqref{mom}-\eqref{bc}
  has a solution in the space \eqref{ureg}, we have $\limsup_{t \nearrow T_*} y_3(t) = \infty$.
  However,
  \begin{equation}
   \lim_{s \to \infty} \int_a^s \frac{dy}{y(1 + \log^+ y)}
   = \infty,
   \label{}
  \end{equation}
  for any $a > 0$,
  which contradicts the fact that the right-hand side of \eqref{contra}
  is finite.
  The result for $s \geq 4$ then follows from induction and
  \eqref{enest2}.

\end{proof}

\section{The free boundary $\pa \D_t$ and the projection to the
tangent space at the boundary}
It is easiest to define the geometric quantities that we will need in terms of
\emph{Lagrangian coordinates}, which we now define. Let $\Omega \subset \R^3$ be diffeomorphic
to the unit ball. We define $x^i(t) = x^i(t,\cdot): \Omega \to \D_t$ by:
\begin{align}
 \frac{d}{dt} x^i(t,y) &= u^i(t, x(t,y)),\\
 x^i(0, y) &= f_0(y),
 \label{}
\end{align}
where $f_0:\Omega \to \D_0$ is a volume preserving diffeomorphism.
This change of coordinates and the metric $\delta_{ij}$ in $\R^3$ induce
a time-dependent metric $g = g(t,y)$ on $\Omega$:
\begin{equation}
 g_{ab}(t,y) = \delta_{ij} \frac{\pa x^i}{\pa y^a}
 \frac{\pa x^j}{\pa y^b}.
 \label{gdef}
\end{equation}
As in \cite{Christodoulou2000}, we will work with the covariant derivative
associated to $g$; if
$\alpha = \alpha_{a_1\cdots a_r} dy^{a_1}\cdots dy^{a_r}$
is a $(0,r)$ tensor then $\nabla \alpha$ is a
$(0, r+1)$ tensor with components:
\begin{equation}
 \nabla_a \alpha_{a_1\cdots a_r} =
 \pa_a \alpha_{a_1\cdots a_r} - \Gamma^b_{a_1 a} \alpha_{b a_2\cdots a_r} - \cdots
- \Gamma^b_{aa_r} \alpha_{a_1 \cdots a_{r-1} b},
 \label{}
\end{equation}
where the Christoffel symbols $\Gamma_{ab}^c$ are defined by:
\begin{equation}
 \Gamma_{ab}^c = \frac{1}{2}g^{cd} \bigg(\frac{\pa}{\pa y^a} g_{bd}
 + \frac{\pa}{\pa y^b} g_{ad} - \frac{\pa}{\pa y^d} g_{ab}\bigg).
 \label{}
\end{equation}
We will frequently make use of the fact that the covariant
derivatives commute, which just follows from the fact that
they commute when expressed in Eulerian coordinates. We will
write
$\nabla^r_{a_1\cdots a_r} = \nabla_{a_1}\cdots
\nabla_{a_r}$ and will often omit the indices $a_1,\cdots a_r$.

We will write $D_t$ for time differentiation in Lagrangian coordinates:
\begin{equation}
 D_t  = \frac{\pa}{\pa t}\bigg|_{y = \textrm{const}}
  = \frac{\pa}{\pa t} \bigg|_{x = \textrm{const}}
  + u^k\frac{\pa}{\pa x^k}.
 \label{}
\end{equation}
Note that $D_t$ does not commute with $x$ derivatives:
\begin{equation}
 [D_t, \pa_i] = -(\pa_i u^k)\pa_k
 \label{commform}
\end{equation}
We will use the convention that the letters $a,b,c...$
refer to quantites expressed in Lagrangian coordinates
and the letters $i,j,k,...$ refer to quantites
expressed in the Eulerian coordinates.

\subsection{The geometry of $\pa \D_t$}
We let $N^a$ denote the unit normal to $\pa \Omega$ with respect to the
metric $g$ defined by \eqref{gdef}, and write $N_a = g_{ab} N^b$ for the
unit conormal. We will write $\gamma$ for the (co)-metric on $\pa \Omega$:
\begin{alignat}{2}
 \gamma_{ab} = g_{ab} - N_a N_b,
 &&\quad
 \gamma^{ab} = g^{ab} - N^aN^b.
 \label{}
\end{alignat}
We also write:
\begin{equation}
 \Pi^a_b = \delta^{a}_b - N^a N_b
 \label{}
\end{equation}
for the orthogonal projection to $T(\pa \Omega)$. More generally
if $\alpha$ is a $(r,s)$ tensor, we set:
\begin{equation}
 (\Pi \alpha)_{a_1\cdots a_s}^{b_1\cdots b_r}
 = \Pi_{a_1}^{c_1}\cdots \Pi_{a_s}^{c_s}
 \Pi_{d_1}^{b_1}\cdots \Pi_{d_r}^{b_r}
 \alpha_{c_1\cdots c_s}^{d_1 \cdots d_r}.
 \label{}
\end{equation}
We will write $\tn$ for covariant differentiation on $\pa \Omega$,
defined by:
\begin{equation}
 \tn \alpha = \Pi \nabla \alpha,
 \label{}
\end{equation}
for an arbitrary tensor field $\alpha$.
The second fundamental form $\theta_{ab}$ is given by:
\begin{equation}
 \theta_{ab} = \tn_a N_b
 = \Pi_a^c \Pi_b^d \nabla_c N_d.
 \label{}
\end{equation}

We let $\iota_0$ denote the injectivity radius of $\pa \D_t$.
The fundamental geometric assumption that we will make is that:
\begin{equation}
 |\theta| + \frac{1}{\iota_0} \leq K, \textrm{ on } \pa \D_t.
 \label{geom}
\end{equation}
Among other things, this assumption ensures that the domain
$\D_t$ satisfies the ``uniform exterior sphere condition'':
\begin{lemma}
  \label{bdyreg}
 If \eqref{geom} holds, then
there is a $r_0 = r_0(K)$ with $r_0 > 0$ so that for each
 $x \in \pa \D_t$, there is an $r > r_0$ and balls $B_1, B_2$ with radius
 less than $r$ so that:
 \begin{alignat}{2}
   B \cap \overline{\D_t} = \{x\},
   &&\quad
   B_2 \cap \overline{\R^3 \setminus \D_t} = \{x\}.
  \label{}
\end{alignat}

\end{lemma}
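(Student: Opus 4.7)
The plan is to construct, for each $x \in \pa \D_t$, a pair of closed balls of a common radius comparable to $1/K$, tangent to $\pa \D_t$ at $x$ and lying on opposite sides of the free surface, each meeting the opposite side only at the tangency point. Since the hypothesis $|\theta| + 1/\iota_0 \leq K$ gives $\iota_0 \geq 1/K$, I will take $r_0 = 1/(2K)$, so that automatically $r_0 < \iota_0$.

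The key technical input is a distance identity: for each $x \in \pa \D_t$ and each $s$ with $|s| < \iota_0$, the point $y = x + s N(x)$ satisfies $\inf_{z \in \pa\D_t} |y - z| = |s|$, with $x$ being the unique nearest boundary point. To prove this I would observe that $\pa \D_t$ is compact (being a diffeomorphic image of $S^2$), so the smooth function $z \mapsto |y - z|^2$ on $\pa \D_t$ attains its minimum at some $z_0$. At any such critical point $y - z_0$ is normal to $\pa \D_t$, giving $y = z_0 + t_0 N(z_0)$ for some $t_0 \in \R$. The inequality $|y - z_0|^2 \leq |y - x|^2 = s^2$ yields $|t_0| \leq |s| < \iota_0$, and injectivity of the normal exponential map $\Phi(z, \sigma) = z + \sigma N(z)$ on $\pa \D_t \times (-\iota_0, \iota_0)$ then forces $(z_0, t_0) = (x, s)$.

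Now set $y_\pm = x \pm r_0 N(x)$ and $B_\pm = \overline{B(y_\pm, r_0)}$. Applying the distance identity with $s = \pm r_0$ immediately yields $B_\pm \cap \pa \D_t = \{x\}$. To conclude that each ball lies on the correct side of $\pa \D_t$, take $N$ outward, so that $y_+$ lies strictly outside $\overline{\D_t}$. If some $w \in B_+ \cap \overline{\D_t}$ were distinct from $x$, then $w \not\in \pa \D_t$ by the previous sentence, so $w$ lies in the interior $\D_t$; the segment from $y_+$ to $w$ lies in the convex ball $B_+$ and must cross $\pa \D_t$, the crossing point being forced to equal $x$. Then $x$ lies strictly between $y_+$ and $w$, so $|y_+ - w| = r_0 + |x - w| > r_0$, contradicting $w \in B_+$. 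The construction of $B_-$ is symmetric.

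The main obstacle I anticipate is the distance identity; after it is in place, the rest is elementary convex geometry. It is worth noting that only the lower bound $\iota_0 \geq 1/K$ is actually used in this argument, with the second-fundamental-form portion of $K$ playing no direct role in the exterior/interior sphere construction itself.
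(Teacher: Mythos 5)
Your proof is correct and takes a cleaner, more rigorous route than the paper's own. The paper's proof separately invokes the curvature bound $|\theta| \leq K$ (for the local statement that the tangent ball does not cross $\pa\D_t$ near $x$) and the injectivity radius bound (for the global statement that no other boundary points lie in the ball), and essentially leaves both verifications to the reader. You instead prove a single distance identity --- for $|s| < \iota_0$, the point $x + sN(x)$ has $x$ as its unique nearest point of $\pa\D_t$, at distance exactly $|s|$ --- using only the first-order condition for a minimizer of $z \mapsto |y-z|^2$ on the compact surface $\pa\D_t$ together with injectivity of the normal exponential map, and both conclusions of the lemma fall out of that identity plus an elementary convexity/separation argument for the correct-side statement. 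Your closing observation is also correct: only $\iota_0 \geq 1/K$ is actually used, not the curvature bound. This is a genuine (if minor) sharpening of what the paper records, since injectivity of the normal exponential map on $\pa\D_t \times (-\iota,\iota)$ alone already yields the uniform exterior and interior sphere conditions with radii up to $\iota$ --- any nearest boundary point $z_0$ to a nearby $p$ must satisfy $p = z_0 + sN(z_0)$, placing it within the scope of the distance identity.
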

\begin{proof}
  Fix $x \in \pa \D_t$ and take $r \leq K/2$. With $z_{\pm} = x \pm r N(x)$,
  where $N(x)$ is the normal vector to $\pa \D_t$ at $x$, the ball of
  radius $r$ centered at $z_{\pm}$ touches $x$ but does not cross
  $\pa \D_t$ (from the outside in the ``$+$'' case and the inside in the
    ``-'' case) since the largest principal curvature at $x$ does not
  exceed $K$. Also, by the bound for the injectivity
  radius, there are no other points of $\pa \D_t$ in this ball.
\end{proof}

We now let $d = d(t,y)$ denote the geodesic distance to the boundary and
define the extension of the normal to the interior of $\Omega$ by:
\begin{equation}
 \widetilde{N}_a = \frac{\nabla_a d}{\sqrt{g^{ab}\nabla_a d\nabla_b d}},
 \label{}
\end{equation}
which is well-defined so long as $(t,y)$ are such that $d(t,y) < \iota/2$, say.
From now on we will abuse notation and just write $N$ instead of $\tilde{N}$.
Note that with this choice of $d$, we have $\nabla_N N = 0$ near the boundary.
We now extend this definition to all of $\pa \Omega$. Let $\varphi \in C_0^\infty(\R)$
be so that $\varphi(x) = 1$ when $|x| \leq 1/4$ and $\varphi(x) = 0$ when $|x| \geq 1/2$,
and set $\varphi_{\iota}(x) = \varphi(\iota^{-1} x)$. We then abuse notation
further and write:
\begin{alignat}{2}
 \Pi^a_b = \delta^a_b - \varphi_\iota N^a N_b,
 &&\quad \tn_b = \Pi^a_b \nabla_b.
 \label{piint}
\end{alignat}
Away from the boundary $\Pi$ is just the identity map and close to the boundary
it agrees with the projection onto the tangent space to the level sets of $d$.
Similarly, away from the boundary $\tn$ is the covariant derivative
and on $\pa \Omega$ it is the covariant derivative on $\pa \Omega$.

If the assumption \eqref{geom} holds, we can control derivatives of $\Pi$:
\begin{lemma}
 Let $d(y) = \dist(y, \pa \Omega)$ denote the geodesic distance in the metric
 $g$ from $y$ to $\pa \Omega$. If $n$ denotes the conormal $n = \nabla d$ to the
 sets $ \{ y \in \Omega: d(y) = a\}$ and $\Pi$ is defined by \eqref{piint}, then:
 \begin{align}
  |\nabla n(t,y)|  + |\nabla \Pi(t,y)| \leq C ||\theta(t,\cdot)||_{L^\infty(\pa \Omega)},
  && |D_t n(t,y)| + |D_t \Pi(t,y)| \leq C ||\nabla u(t,\cdot)||_{L^\infty(\Omega)},
  \label{dgammabd}
 \end{align}
 when $d(y) < \frac{\iota_0}{2}$, where $\iota_0$ is the normal injectivity
 radius of $\pa \D_t$.
\end{lemma}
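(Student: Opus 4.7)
\emph{Strategy.} The plan is to work in the collar $\{d < \iota_0/2\}$, which under \eqref{geom} is smoothly parametrized by the normal exponential map $(x', s) \mapsto x' + sN(x')$ from $\pa\D_t \times [0, \iota_0/2)$, with inverse $x \mapsto (\pi(x), d(x))$, where $\pi$ is the nearest-point retraction. Since the Lagrangian metric $g$ is the pullback of the Euclidean metric, the ambient space is flat, so the second fundamental form of the level sets $\{d = s\}$ is controlled explicitly from $\theta$. I treat the spatial and material derivatives separately.

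\emph{Spatial derivatives.} The extended conormal $n = \nabla d$ satisfies $|n|\equiv 1$ and $\nabla_n n = 0$, so $\nabla n = \nabla^2 d$ coincides with the (negative of the) shape operator $S$ of the level set $\{d = s\}$. The vanishing ambient curvature reduces the Riccati equation to $\pa_s S + S^2 = 0$ with $S|_{s=0} = -\theta$, giving
\begin{equation}
 S(s) = -\theta\,(I + s\theta)^{-1},
\end{equation}
whose norm is bounded by $C|\theta|$ for $s < \iota_0/2$ under \eqref{geom}, since the injectivity radius bound keeps $s|\theta|$ strictly below $1$. The corresponding bound on $\nabla \Pi$ then follows from the product rule on $\Pi = \delta - \varphi_\iota N \otimes N$, where the cutoff term $|\nabla \varphi_\iota| \leq C/\iota_0 \leq CK$ is absorbed into the constant $C = C(K)$.

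\emph{Time derivatives.} In the Eulerian picture, the moving boundary gives the standard formula $\pa_t d(t, x) = -u(t, \pi(x)) \cdot N(\pi(x))$ for $x$ in the collar, and hence
\begin{equation}
 D_t d(t,x) = n(x)\cdot \bigl[u(t,x) - u(t,\pi(x))\bigr],
\end{equation}
which already yields $|D_t d| \leq d \cdot \|\nabla u\|_{L^\infty}$. Using the commutator \eqref{commform}, I find $D_t n_i = \pa_i(D_t d) - (\pa_i u^k) n_k$, and differentiating the formula above term by term bounds each piece by $C \|\nabla u\|_{L^\infty}$: the product $(\pa_i n)(u - u \circ \pi)$ is controlled by $C|\theta| d \|\nabla u\|_{L^\infty}$ with $d|\theta|$ bounded under \eqref{geom}; the direct term $n_k \pa_i u^k$ is immediate; and the chain-rule term involving $\pa_i \pi$ is bounded since $\pi$ is Lipschitz on the collar. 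The bound on $D_t \Pi$ then follows from the product rule on $\Pi = \delta - \varphi_\iota N \otimes N$ together with $|D_t \varphi_\iota| \leq C\|\nabla u\|_{L^\infty}$ from the chain rule applied to $\varphi_\iota(d)$.

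\emph{Main obstacle.} The main technical point is ensuring that the retraction $\pi$ is Lipschitz with constant depending only on $K$; this reduces to the injectivity radius and curvature bounds in \eqref{geom}, which together control the Jacobian of the normal exponential map away from focal points and give the required smoothness of the inverse parametrization $x \mapsto (\pi(x), d(x))$.
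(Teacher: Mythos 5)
The paper itself does not prove this lemma; it defers entirely to Lemmas 2.1 and 3.10--3.11 of \cite{Christodoulou2000}. Your argument supplies the missing derivation and does so by the natural route that those references take: the Riccati identity for the Hessian of the distance function to bound $\nabla n$, and the kinematics of the moving boundary plus the commutator \eqref{commform} to bound $D_t n$. The structure is sound, the formula $D_t d = n\cdot\bigl(u - u\circ\pi\bigr)$ is correct, the cancellation between $n_j\,\pa_i u^j(x)$ and the commutator term $-(\pa_i u^k) n_k$ is harmless (you count both but each is $O(\|\nabla u\|_{L^\infty})$), and $\nabla\pi = I - n\otimes n - d\,\nabla n$ gives the Lipschitz bound on $\pi$ once the first estimate is in hand.

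Two points you should tighten. First, a sign: solving $S'+S^2=0$ with $S(0)=S_0$ gives $S(s)=S_0(I+sS_0)^{-1}$, so with $S_0=-\theta$ you should have $(I-s\theta)^{-1}$, not $(I+s\theta)^{-1}$; this does not change the estimate. Second, and more substantively, the step ``the injectivity radius bound keeps $s|\theta|$ strictly below $1$'' quietly uses $\|\theta\|_{L^\infty}\leq 1/\iota_0$. This is true --- the normal injectivity radius cannot exceed the focal distance, and in flat space the focal distance from a hypersurface equals the reciprocal of the largest principal curvature --- but it is a geometric fact about $\iota_0$ rather than something that follows directly from the stated hypothesis $|\theta| + 1/\iota_0 \leq K$ (which, by itself, allows $\iota_0$ to be large while $|\theta|$ stays of order $K$, making $d\,|\theta|$ unbounded on the collar $\{d<\iota_0/2\}$). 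You should state $\|\theta\|_{L^\infty}\,\iota_0\leq 1$ explicitly as the input that makes both $|(I-s\theta)^{-1}|\leq 2$ and the later bound $d\,|\theta|\leq 1/2$ go through. Relatedly, the contribution $|\nabla\varphi_\iota|\lesssim 1/\iota_0$ to $\nabla\Pi$ is then also controlled via this same relation and the ambient $K$-dependence of the constant, which is what the lemma implicitly intends.
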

\begin{proof}
 See Lemmas 2.1, 3.10-3.11 in \cite{Christodoulou2000}.
\end{proof}

\section{Elliptic estimates}
\label{ellsec}

We will need many of the elliptic estimates from \cite{Christodoulou2000}. The basic result
we rely on is the following pointwise inequality.
Let $\beta = \beta_{Ii}  \nabla_{I}^r \alpha_i$ for a $(0,1)$-tensor
$\alpha$. We will write:
\begin{alignat}{2}
 \div \beta = \nabla^i \beta_{Ii},
 &&\quad \curl \beta_{ij} = \nabla_i \beta_{Ij} - \nabla_j \beta_{Ii},
 \label{}
\end{alignat}
and:
\begin{equation}
 (\Pi \beta)_{Ji} = \Pi^I_J \beta_{Ii}
 = \Pi^{i_i}_{j_1} \cdots \Pi^{i_s}_{j_s} \beta_{i_1 \cdot i_s i}.
 \label{}
\end{equation}
We then have Lemma 5.5 from \cite{Christodoulou2000}:
\begin{lemma}
  With the above notation:
  \begin{equation}
   |\nabla \beta| \leq C\big( |\div \beta| + |\curl \beta| +
   |\Pi \nabla \beta|\big).
   \label{pwlem}
  \end{equation}
\end{lemma}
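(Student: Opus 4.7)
The plan is to verify this pointwise algebraic identity by working at each point $p \in \Omega$ in an orthonormal frame $(N, T_1, T_2)$ adapted to the level sets of the distance function $d$, with $N$ the extended conormal (so $\nabla_N N = 0$ by the geodesic construction). In this frame the $(r+2)$-tensor $\nabla \beta$ decomposes according to whether each index is in the $N$ or tangential direction, and $|\nabla \beta|^2$ is the sum of squared components. The components in which all $r+1$ derivative-type indices (the new slot $j$ and all of $I$) are tangential are, by definition, the components of $\Pi \nabla \beta$, contributing $|\Pi \nabla \beta|^2$. It therefore suffices to bound the ``bad'' components, which carry at least one normal direction in some derivative-type slot, in terms of $|\div \beta|$, $|\curl \beta|$, and $|\Pi \nabla \beta|$.

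First I would handle the component where both $j$ and the last index $i$ of $\beta$ are normal, with $I$ tangential. Using $g^{ji} = N^j N^i + \gamma^{ji}$ in the definition $\div \beta = g^{ji} \nabla_j \beta_{Ii}$ yields
\begin{equation}
 N^j N^i \nabla_j \beta_{Ii} = \div \beta - \gamma^{ji} \nabla_j \beta_{Ii},
\end{equation}
and since $\gamma$ annihilates $N$, the second term on the right is a contraction of a derivative-tangential component of $\nabla \beta$ and is dominated by $|\Pi \nabla \beta|$.

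Next, for the component where only $j$ is normal while $I$ and $i$ are tangential, I would contract the curl identity $(\curl \beta)_{Iji} = \nabla_j \beta_{Ii} - \nabla_i \beta_{Ij}$ with $N^j$ to obtain $\nabla_N \beta_{Ii} = N^j (\curl \beta)_{Iji} + N^j \nabla_i \beta_{Ij}$. Passing $N^j$ inside the derivative by Leibniz rewrites the last term as a tangential derivative of the scalar $N^j \beta_{Ij}$, which falls under $|\Pi \nabla \beta|$, plus a correction $-(\nabla_i N^j) \beta_{Ij}$ that vanishes in the $N$-direction (via $\nabla_N N = 0$) and is otherwise absorbed using the bounds on $\nabla N$ provided by \eqref{dgammabd}.

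For the remaining bad components, whose normal direction sits in one of the middle slots within $I$, I would use the commutativity of successive covariant derivatives (valid because $g$ is the pull-back of the flat Euclidean metric on $\R^3$) to permute the normal index into the $j$-slot, reducing to the two cases above. The main obstacle will be the index bookkeeping: each Leibniz manipulation produces correction terms involving $\nabla N$ or $\nabla \Pi$, and one has to verify carefully that these terms either vanish (as above, via $\nabla_N N = 0$) or are absorbed into the three quantities on the right-hand side, so that iterating over the $r$ derivative slots does not accumulate uncontrolled geometric factors in the final constant $C$.
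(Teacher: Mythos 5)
Your overall strategy---working in an adapted orthonormal frame, exploiting the total symmetry of $\nabla\beta = \nabla^{r+1}\alpha$ in its first $r+1$ indices, and extracting the normal--normal component from the divergence, the normal--tangential component from the curl, and the fully tangential components from $\Pi\nabla\beta$---is the right one, and is essentially the argument behind Lemma~5.5 of Lindblad--Luo, which the present paper quotes without reproducing a proof. However, there are two concrete problems with the execution.

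First, the Leibniz manipulation in the curl step is both unnecessary and harmful. The term $N^j\nabla_i\beta_{Ij}$, with $i$ and $I$ tangential, is \emph{already} a component of $\Pi\nabla\beta$ contracted against $N$ in the last slot: by the paper's definition, the projection $\Pi$ acts only on the derivative index and the multi-index $I$, not on the last index of $\beta$, so this term is directly bounded by $|\Pi\nabla\beta|$ with no rewriting. Passing $N^j$ inside the derivative produces a genuine correction $-(\nabla_i N^j)\beta_{Ij}$; this does not vanish (the derivative $\nabla_i$ here is tangential, so $\nabla_N N=0$ is irrelevant), and absorbing it via \eqref{dgammabd} would make the constant $C$ depend on $K$. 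But \eqref{pwlem} is a pure pointwise algebraic inequality with an absolute constant (depending at most on $r$ and the dimension)---this is visible in how it is used, e.g.\ in the first estimate of Lemma~\ref{coerlem}, where no $K$-dependence appears.

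Second, and more seriously, the final step---``permute the normal index into the $j$-slot, reducing to the two cases above''---does not close. Both of your cases explicitly assume that all of $I$ is tangential. If a component of $\nabla\beta$ has two or more normal indices among the $r+1$ symmetric derivative slots $(j,I)$, permuting one of them into the $j$-slot still leaves normal indices inside $I$, and neither of your two computations applies to that configuration. What is needed is an induction on the number $k$ of normal indices among $(j,I)$: assuming every component with at most $k$ such normal indices is bounded, run the divergence and curl identities with $I$ now containing $k$ normal indices. In the divergence step the residual term $\gamma^{ji}\nabla_j\beta_{Ii}$ has $j$ tangential, hence only $k$ normal derivative-indices, and falls under the inductive hypothesis rather than under $|\Pi\nabla\beta|$; likewise for the residual $N^j\nabla_i\beta_{Ij}$ in the curl step. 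Making this induction explicit closes the gap without any new idea, but as written your reduction is not valid.
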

We will also use the following simple version of the trace inequality,
which is estimate (5.19) in \cite{Christodoulou2000}.
\begin{lemma}
  If \eqref{geom} holds, then:
  \begin{equation}
   ||\beta||_{L^2(\pa \Omega)}^2 \leq C \big(||\nabla \beta||_{L^2(\Omega)}^2
   + K ||\beta||_{L^2(\Omega)}^2\big).
   \label{basictrace}
  \end{equation}
\end{lemma}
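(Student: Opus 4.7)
The plan is to reduce the boundary integral to an interior integral via the divergence theorem, using the extended normal field $N$ from \eqref{piint}. Specifically, let $X^a = \varphi_\iota(d) N^a$, which equals $N^a$ on $\pa\Omega$ (up to the cutoff choice near $d = 0$, where $\varphi_\iota \equiv 1$) and is supported in $\{d < \iota/2\}$. Since $X \cdot N = 1$ on $\pa\Omega$, the divergence theorem gives
\begin{equation}
\int_{\pa \Omega} |\beta|^2 \, dS
= \int_\Omega \nabla_a \bigl(|\beta|^2 X^a\bigr) \, dV
= \int_\Omega |\beta|^2 \, \div X \, dV + 2 \int_\Omega X^a \beta \cdot \nabla_a \beta \, dV.
\end{equation}

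Next, I would control the two terms. For the first, the derivative of $X$ splits into a contribution from the cutoff, $\varphi'_\iota(d) (\nabla d) \cdot N$, and the intrinsic divergence $\varphi_\iota \div N$. The cutoff term is bounded by $C/\iota \leq CK$ by \eqref{geom}, and $|\div N| \leq C|\nabla N| \leq CK$ by the first estimate in \eqref{dgammabd}. Thus $|\div X| \leq CK$ pointwise, giving the bound $CK \|\beta\|_{L^2(\Omega)}^2$ for the first term. For the cross term, since $|X| \leq 1$, Cauchy--Schwarz followed by Young's inequality yields
\begin{equation}
2 \int_\Omega X^a \beta \cdot \nabla_a \beta \, dV
\leq 2 \|\beta\|_{L^2(\Omega)} \|\nabla \beta\|_{L^2(\Omega)}
\leq \|\beta\|_{L^2(\Omega)}^2 + \|\nabla \beta\|_{L^2(\Omega)}^2.
\end{equation}

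Combining, we obtain $\|\beta\|_{L^2(\pa\Omega)}^2 \leq (CK + 1) \|\beta\|_{L^2(\Omega)}^2 + \|\nabla \beta\|_{L^2(\Omega)}^2$. Since the assumption \eqref{geom} together with the fact that $\iota_0$ is bounded by the diameter of $\D_t$ forces $K$ to be bounded below by a universal constant (so $1 \leq CK$), this absorbs into the desired form \eqref{basictrace}.

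The only subtle point is the dependence on the cutoff scale $\iota$ in $\varphi_\iota$, which appears to worsen the estimate as $\iota \to 0$; but this is exactly compensated by the assumption $1/\iota_0 \leq K$, so the derivative of $\varphi_\iota$ contributes at the same order as the curvature $\theta$. The main task is therefore just to verify that all the constants can be tracked back to the single parameter $K$, which is immediate from \eqref{dgammabd}.
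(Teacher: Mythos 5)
Your approach — flux through a vector field that extends the normal, divergence theorem, and Young's inequality — is the standard proof of this kind of trace inequality, and it is almost certainly what the cited reference (Lindblad--Luo, estimate (5.19)) does as well. The computation of $\div X$ is right: with $N = \nabla d$ you get $\div X = \varphi_\iota'(d) + \varphi_\iota(d)\,\div N$, the first term is $O(\iota_0^{-1}) = O(K)$ and the second is $O(\|\theta\|_{L^\infty}) = O(K)$ by \eqref{dgammabd}.

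The one weak spot is the last step, where you absorb the extra $\|\beta\|_{L^2(\Omega)}^2$ produced by Young into the $K\|\beta\|^2$ term by claiming $1 \le CK$. Your justification — that $\iota_0$ is bounded by the diameter of $\D_t$ — does not by itself yield a lower bound on $K$ independent of the domain, since the diameter can be large (making $1/\iota_0$ small) while $|\theta|$ is also small. What actually rescues the claim is the fixed volume: a lower bound for $K$ comes from $\iota_0 \leq C(\Vol\D_t)^{1/3}$ (a ball of radius comparable to $\iota_0$ must sit inside $\D_t$), so the "universal" constant depends on $\Vol\Omega$. The cleaner way to present this is simply to note that one may take $K \geq 1$ without loss of generality (increasing $K$ only weakens both hypothesis and conclusion), or to state the output of the argument as $\|\beta\|_{L^2(\pa\Omega)}^2 \leq \|\nabla\beta\|_{L^2(\Omega)}^2 + C(1+K)\|\beta\|_{L^2(\Omega)}^2$ and observe this is what is used. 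With that small repair the proof is complete.
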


The following estimates are based on \eqref{pwlem} and will be
used to control both $p$ and $D_t p$ in the interior. The first estimate
for $r \geq 2$
is (5.28) in \cite{Christodoulou2000}, while the second estimate
follows from (5.20) in \cite{Christodoulou2000}
 and the estimate $||\nabla q||_{L^2(\Omega)}
\leq ||\Delta q||_{L^2(\Omega)}$ if $q = 0$ on $\pa \Omega$
which is just integration by parts.
\begin{lemma}
 If \eqref{geom} holds and $r \geq 2$, then:
 \begin{equation}
  ||\nabla^r q||_{L^2(\pa \Omega)} + ||\nabla^r q||_{L^2(\Omega)}
  \leq C ||\Pi \nabla^r q||_{L^2(\pa \Omega)} +
  C(K, \Vol(\Omega)) \sum_{s \leq r-1} ||\nabla^s \Delta q||_{L^2(\Omega)}.
  \label{basicproj}
 \end{equation}
 If $q = 0$ on $\pa \Omega$, then in addition:
 \begin{align}
  ||\nabla q||_{L^2(\pa \Omega)}
  + ||\nabla q||_{L^2(\Omega)} \leq C(K) ||\Delta q||_{L^2(\Omega)}
  \label{basicproj2}
 \end{align}
\end{lemma}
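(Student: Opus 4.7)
I would combine the pointwise inequality \eqref{pwlem}, the trace inequality \eqref{basictrace}, and integration by parts, handling the two estimates separately.

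For the first estimate ($r \geq 2$), I would apply \eqref{pwlem} to the tensor $\beta_{Ii} = \nabla_I^{r-1} \nabla_i q$, viewed as $\nabla_I^{r-2} \alpha_i$ with $\alpha_i = \nabla_i q$. Because the metric $g$ on $\Omega$ is a pullback of the flat Euclidean metric on $\R^3$, the intrinsic Riemann tensor of $g$ vanishes, so $\curl \nabla^{r-1} q = 0$ and $\div \nabla^{r-1} q = \nabla^{r-2} \Delta q$ identically. Hence
\begin{equation}
|\nabla^r q| \leq C\bigl(|\nabla^{r-2} \Delta q| + |\Pi \nabla^r q|\bigr)
\end{equation}
pointwise on $\Omega$, where $\Pi$ is the extension \eqref{piint}. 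Squaring and integrating over both $\Omega$ and $\pa \Omega$, and using the trace inequality \eqref{basictrace} to move between boundary and interior norms (the $K$-dependence enters through the bounds on derivatives of $N$ and $\Pi$ supplied by \eqref{dgammabd}), I would then iterate on $r$, with a base case coming from Poincaré's inequality (which produces the $\Vol(\Omega)$ dependence), to absorb lower-order derivatives of $q$ into $\sum_{s \leq r-1} \|\nabla^s \Delta q\|_{L^2(\Omega)}$. This is essentially the content of Lemma 5.6 in \cite{Christodoulou2000}.

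For the second estimate, with $q = 0$ on $\pa \Omega$, the boundary term in Green's identity vanishes:
\begin{equation}
\int_\Omega |\nabla q|^2 \, d\mu_g = -\int_\Omega q \, \Delta q \, d\mu_g,
\end{equation}
and Cauchy--Schwarz combined with the Poincar\'e inequality $\|q\|_{L^2(\Omega)} \leq C(K) \|\nabla q\|_{L^2(\Omega)}$ (valid thanks to the uniform exterior sphere condition from Lemma \ref{bdyreg}) yields $\|\nabla q\|_{L^2(\Omega)} \leq C(K)\|\Delta q\|_{L^2(\Omega)}$. For the boundary norm, I would use a Rellich-type identity: extend $N$ to a vector field $V$ on $\Omega$ with $V|_{\pa \Omega} = N$ and $|\nabla V| \leq C(K)$ via \eqref{dgammabd}, pair $\Delta q$ with $V^i \nabla_i q$, and integrate by parts. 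Since $q = 0$ on $\pa \Omega$ forces $\nabla q|_{\pa \Omega}$ to be purely normal, the boundary term is $\|\nabla q\|^2_{L^2(\pa \Omega)}$, while the interior terms are bounded by $C(K)\bigl(\|\nabla q\|_{L^2(\Omega)} \|\Delta q\|_{L^2(\Omega)} + \|\nabla q\|^2_{L^2(\Omega)}\bigr)$; combining with the interior estimate just obtained gives the claimed boundary bound.

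The main obstacle is not conceptual but bookkeeping: tracking how the extended projection $\Pi$ and the extended normal $N$ contribute $K$-dependent constants through \eqref{dgammabd} when commuting and integrating by parts. Each step of the argument is a routine application of \eqref{pwlem}, \eqref{basictrace}, or the divergence theorem, with the details for the first estimate essentially carried out in \cite{Christodoulou2000}.
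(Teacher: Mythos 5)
Your plan is on the right track and follows the same route the paper (and Lindblad--Luo, to which the paper simply defers) takes: the div--curl pointwise estimate \eqref{pwlem} together with the trace inequality \eqref{basictrace} for the first bound, and a Rellich-type identity together with the Poincar\'e estimate \eqref{poin2} for the second bound. The Rellich argument for $\|\nabla q\|_{L^2(\partial\Omega)}$ is indeed the content behind the cited estimate (5.20) of \cite{Christodoulou2000}, and your computation there is correct (noting that $q=0$ on $\partial\Omega$ forces $\nabla q = N\,\nabla_N q$ on the boundary, so the two boundary terms combine to $\tfrac12\int_{\partial\Omega}(V\cdot N)|\nabla q|^2$). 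One small remark: the constant you obtain, and the one the paper needs, really is $C(K,\Vol\Omega)$ rather than $C(K)$, since the Poincar\'e step contributes a $\Vol\Omega$ factor; this is harmless because $\Vol\D_t$ is conserved.

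There is one genuine imprecision in your sketch of the \emph{interior} part of the first estimate. You say ``squaring and integrating over both $\Omega$ and $\partial\Omega$'' and let the trace inequality ``move between boundary and interior norms,'' but squaring the pointwise inequality and integrating over $\Omega$ yields nothing: away from the boundary layer, the extended projection $\Pi$ of \eqref{piint} is the identity, so the pointwise bound $|\nabla^r q| \leq C(|\nabla^{r-2}\Delta q| + |\Pi\nabla^r q|)$ is trivially satisfied and does not control $\|\nabla^r q\|_{L^2(\Omega)}$. Moreover, \eqref{basictrace} bounds boundary norms by interior norms, not the reverse, so it cannot be used to recover $\|\nabla^r q\|_{L^2(\Omega)}$ from $\|\Pi\nabla^r q\|_{L^2(\partial\Omega)}$. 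The correct mechanism — and what is actually behind (5.28) in \cite{Christodoulou2000} — is an integration-by-parts (Green's identity) step of the form
\begin{equation}
\int_\Omega |\nabla^r q|^2 \,d\mu_g = \int_{\partial\Omega} \nabla^{r-1}q \cdot \nabla_N \nabla^{r-1} q\, dS - \int_\Omega \nabla^{r-1}q\cdot\nabla^{r-1}\Delta q\,d\mu_g,
\end{equation}
which converts the interior quantity into boundary terms (to which your boundary argument and the induction hypothesis then apply) plus $\Delta q$ terms. Your ``iterate on $r$'' remark gestures at this, but without the Green's identity the interior estimate does not follow from what you wrote. Since you defer to \cite{Christodoulou2000} for details, this is a gap in the sketch rather than an error in the conclusion, but the mechanism should be stated correctly.

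Also a small notational slip: you wrote $\beta_{Ii} = \nabla_I^{r-1}\nabla_i q$ but then identified $\beta$ with $\nabla_I^{r-2}\alpha_i$ for $\alpha_i = \nabla_i q$; the latter is what you use, so it should read $\beta_{Ii} = \nabla_I^{r-2}\nabla_i q$, i.e. $\beta = \nabla^{r-1}q$.
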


We also need estimates to control $\Pi\nabla^r  q$ when $q = 0$ on
$\pa \Omega$. Note that when $r = 2$, we have:
\begin{equation}
  0 = \Pi^{ij}\nabla_i \big(\Pi^{k\ell}\nabla_k q\big)
  = \Pi^{ij}\Pi^{k\ell} \nabla_i \nabla_k q + \Pi^{ij}(\nabla_i\Pi^{k\ell})\nabla_k q.
 \label{calc}
\end{equation}
Recalling that $\Pi^{k\ell} = \delta^{k\ell}  - N^k N^\ell$ and that
since $q = 0$ on $\pa \Omega$, $\nabla_k q = N_k (\nabla_N q)$, so the
above implies that:
\begin{equation}
 \Pi^{ij}\Pi^{k\ell} \nabla_i\nabla_k q = \theta_{ij} \nabla_N q.
 \label{nabla2}
\end{equation}

We will need a higher-order version of this formula.
As explained in \cite{Christodoulou2000}, the idea is that since $q = 0$ on $\pa \Omega$,
$q / d$ is smooth up to the boundary (recall that $d$ is the distance to
the boundary), and we can write:
\begin{equation}
 \Pi \nabla^r q = \Pi \nabla^r(d\, q/d)
 = \sum_{\ell = 0}^r \Pi  (\nabla^\ell d)\otimes (\nabla^{r-\ell} q/d).
 \label{exp}
\end{equation}
On $\pa \Omega$, $d
= \Pi \nabla d = 0$ and
$\Pi \nabla^2 d = \theta$, so that $\Pi \nabla^\ell d \sim \tn^{\ell-2}\theta$, to
highest order.
We now want to replace the derivatives $\nabla^{r-\ell}$ with tangential derivatives
in the above expression.
Writing
$\nabla^i = \tn^i + N^i N_j\nabla^j,$
and inserting this into \eqref{exp} generates derivatives of $N$. Repeatedly
using that $\nabla_N N = 0$, these can be turned into derivatives
of $\theta$.
Also, we have $q/d \sim \nabla_N q$,
and so we should expect:
\begin{equation}
 \Pi \nabla^r q \sim \sum_{\ell = 2}^r (\tn^{\ell-2}\theta)
 \otimes (\nabla^{r-\ell} \nabla_N q).
 \label{heur}
\end{equation}
Note that this also gives a bound for $\tn^{r-2} \theta$, assuming
that $|\nabla_N q|$ is bounded below.
In particular, a direct calculation similar to \eqref{calc} (see (4.21) in
\cite{Christodoulou2000}) also shows that:
\begin{align}
 \tn \theta = (\nabla_N q)^{-1} \bigg(\Pi \nabla^3q
 - 3 \theta \widetilde{\otimes} \tn \nabla_N q\bigg),
 \label{tn1}
\end{align}
where $\widetilde{\otimes}$ is a symmetrization over some of the indices
appearing in the tensor product $\theta \otimes \tn \nabla_N q$ but
for our purposes the exact form is not important.

The precise version of \eqref{heur} and a version of
\eqref{tn1} for higher derivatives of $\theta$ can be found in Proposition 5.9 in \cite{Christodoulou2000}
(see also Proposition 4.3 there):
\begin{prop}
 We have:
 \begin{multline}
  ||\Pi \nabla^r q||_{L^2(\pa \Omega)} \leq
  C(K)\bigg( ||\tn^{r-2} \theta||_{L^2(\pa \Omega)}
  ||\nabla_N q||_{L^\infty(\pa \Omega)}
  + \sum_{k = 1}^{r-1} ||\theta||^k_{L^\infty(\pa \Omega)}
  ||\nabla^{r-k} q||_{L^2(\pa \Omega)}\bigg)\\
  + \big( ||\theta||_{L^\infty(\pa \Omega)}
  + \sum_{k \leq r-2} ||\tn^k \theta||_{L^2(\pa \Omega)}\big)
  \sum_{k \leq r-2} ||\nabla^k q||_{L^2(\pa \Omega)},
  \bigg)
  \label{projest}
 \end{multline}
 and:
 \begin{multline}
  ||\nabla^{r-1} q||_{L^2(\pa \Omega)}
  \leq C\bigg( ||\tn^{r-3} \theta||_{L^2(\pa \Omega)}||\nabla_N q||_{L^2(\pa\Omega)}
  + ||\nabla^{r-2} \Delta q||_{L^2(\Omega)}
  \\
  +C(K, \Vol(\Omega), ||\theta||_{L^2(\pa \Omega)},...,
  ||\tn^{r-4} \theta||_{L^2(\pa \Omega)})
  \bigg( ||\nabla_N q||_{L^\infty(\pa \Omega)}
  + \sum_{s \leq r-3} ||\nabla^s \Delta q||_{L^2(\pa \Omega)}\bigg).
  \label{trace}
 \end{multline}
 In addition, if $|\nabla_N q| \geq  \ve > 0$ and $|\nabla_N q| \geq 2\ve
  ||\nabla_N q||_{L^\infty(\pa \Omega)}$ on $\pa \Omega$ for some $\ve > 0$ then:
 \begin{align}
  ||\tn^{r-2} \theta||_{L^2(\pa \Omega)}^2
  \leq C(\ve^{-1}, K)\bigg( ||\Pi \nabla^r q||_{L^2(\pa \Omega)}
  + \big(||\theta||_{L^\infty(\pa \Omega)}
  + \sum_{k \leq r-3} ||\tn^k \theta||_{L^2(\pa \Omega)}\big)
  \sum_{k \leq r-1} ||\nabla^k  q||_{L^2(\pa \Omega)}\bigg).
  \label{thetabd}
 \end{align}
\end{prop}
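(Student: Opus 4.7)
The plan is to prove all three estimates by exploiting the smooth factorization $q = d\,(q/d)$, where $d$ is the geodesic distance to $\pa\Omega$ and, since $q|_{\pa\Omega} = 0$, the quotient $q/d$ extends smoothly up to the boundary with $(q/d)\big|_{\pa\Omega} = \nabla_N q$. Applying the Leibniz rule yields
\[
\nabla^r q = \sum_{\ell = 0}^r \binom{r}{\ell} (\nabla^\ell d) \otimes (\nabla^{r-\ell}(q/d)).
\]
On $\pa\Omega$, the identities $d = 0$ and $\Pi \nabla d = 0$ force only the $\ell \geq 2$ terms to contribute after applying $\Pi$, and the leading contribution comes from $\ell = 2$, where $\Pi \nabla^2 d = \theta$ pairs with $\nabla^{r-2}(q/d)$ to produce the expected $\theta \otimes \nabla^{r-2}\nabla_N q$.

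For \eqref{projest}, the main task is to handle the higher-$\ell$ contributions. I would decompose $\nabla = \tn + N \otimes \nabla_N$ on the factor $\nabla^{r-\ell}(q/d)$ and repeatedly invoke the crucial identity $\nabla_N N = 0$ (valid for the gradient extension of $N$ in a tubular neighborhood of $\pa\Omega$) to convert every normal derivative of $N$ that arises into a purely tangential expression. This identifies $\Pi \nabla^\ell d$ with $\tn^{\ell - 2}\theta$ to leading order, plus lower-order corrections polynomial in $\|\theta\|_{L^\infty}$ and its tangential derivatives. Collecting terms and using $(q/d)|_{\pa\Omega} = \nabla_N q$ produces the right-hand side of \eqref{projest}. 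The estimate \eqref{thetabd} then follows from an algebraic inversion of \eqref{projest}: since the coefficient of $\|\tn^{r-2}\theta\|_{L^2(\pa\Omega)}$ on the right-hand side is $\|\nabla_N q\|_{L^\infty(\pa\Omega)}$, the hypothesis $|\nabla_N q| \geq 2\ve \|\nabla_N q\|_{L^\infty(\pa\Omega)}$ is precisely what is needed to divide through by the lower bound on $|\nabla_N q|$ without degrading the constant.

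For \eqref{trace}, I would apply the basic trace inequality \eqref{basictrace} to $\beta = \nabla^{r-1} q$ to reduce the boundary $L^2$ norm to the interior norms $\|\nabla^r q\|_{L^2(\Omega)}$ and $\|\nabla^{r-1} q\|_{L^2(\Omega)}$. These are controlled by \eqref{basicproj} and \eqref{basicproj2} in terms of $\|\Pi \nabla^r q\|_{L^2(\pa\Omega)}$ and Laplacian terms $\|\nabla^s \Delta q\|_{L^2(\Omega)}$. Rather than feeding this directly into \eqref{projest} (which would produce a $\tn^{r-2}\theta$ term, one order too high), one exploits the factorization argument at order $r-1$ and uses the Laplace equation on $q$ to recover the normal derivatives of $q$ on $\pa\Omega$ from $\Delta q$ and tangential derivatives. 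This produces the leading $\tn^{r-3}\theta \otimes \nabla_N q$ piece in $L^2(\pa\Omega)$ and the $\|\nabla^{r-2}\Delta q\|_{L^2(\Omega)}$ term, with the remaining contributions organized into the lower-order groupings on the right of \eqref{trace}.

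The main obstacle is combinatorial rather than analytical: one must carefully unfold $\Pi \nabla^\ell d$ as a sum of products of tangential derivatives of $\theta$ and track the many terms generated by the Leibniz expansion while distinguishing the leading $\tn^{r-2}\theta$ piece from the many lower-order contributions. The bookkeeping of how the projections $\Pi$ interact with the symmetrizations and with the decomposition $\nabla = \tn + N\otimes\nabla_N$ is delicate, but the structural inputs $\nabla_N N = 0$ and $\Pi \nabla^2 d = \theta$ are what ultimately make the argument go through.
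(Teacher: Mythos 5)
Your plan mirrors the heuristic the paper sketches just before the proposition (the factorization $q = d\,(q/d)$, the Leibniz expansion \eqref{exp}, the identities $d = \Pi\nabla d = 0$ and $\Pi\nabla^2 d = \theta$ on $\pa\Omega$, and $\nabla_N N = 0$ to convert normal derivatives into tangential ones), and in fact the paper does not prove this proposition itself: it defers to Propositions 4.3 and 5.9 of \cite{Christodoulou2000}, which carry out exactly this argument. So the approach is the right one, but the combinatorial bookkeeping you flag as the ``main obstacle'' is the entire content of the result, and your proposal leaves it undone, just as the paper does.

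There is also a genuine logical gap in how you arrive at \eqref{thetabd}. You claim it follows from ``an algebraic inversion of \eqref{projest},'' but an inequality cannot be inverted: the bound $||\Pi\nabla^r q|| \leq ||\nabla_N q||_{L^\infty}||\tn^{r-2}\theta|| + \cdots$ gives no lower bound on $||\Pi\nabla^r q||$ in terms of $||\tn^{r-2}\theta||$. What one actually needs is the \emph{pointwise identity} underlying \eqref{projest} --- the higher-order analog of \eqref{tn1} --- which expresses $\tn^{r-2}\theta$ as $(\nabla_N q)^{-1}$ times $\Pi\nabla^r q$ plus lower-order contractions as an equality of tensors on $\pa\Omega$. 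One then takes $L^2$ norms; the lower bound $|\nabla_N q| \geq \ve$ controls $||(\nabla_N q)^{-1}||_{L^\infty(\pa\Omega)}$, and the uniformity hypothesis $|\nabla_N q| \geq 2\ve ||\nabla_N q||_{L^\infty(\pa\Omega)}$ controls the ratio $||\nabla_N q||_{L^\infty}/|\nabla_N q|$ that appears when the lower-order terms (which carry factors of $||\nabla_N q||_{L^\infty}$) are divided through by $\nabla_N q$. A smaller issue: for \eqref{trace}, routing through \eqref{basictrace} applied to $\nabla^{r-1}q$ produces $||\nabla^r q||_{L^2(\Omega)}$, one derivative too many; it is cleaner to apply \eqref{basicproj} directly at order $r-1$, which already controls $||\nabla^{r-1}q||_{L^2(\pa\Omega)}$ by $||\Pi\nabla^{r-1}q||_{L^2(\pa\Omega)}$ and $\sum_{s\leq r-2}||\nabla^s\Delta q||_{L^2(\Omega)}$, and then to apply \eqref{projest} at order $r-1$ to produce $\tn^{r-3}\theta$. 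You do back out of the detour, but the initial step is a misstep.
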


The next set of estimates are more well-known.
For example the first estimate is Theorem 9.14 in \cite{gilbarg2001}
combined with Lemma \ref{bdyreg},
and the second estimate follows
from a modification of the proof of Theorem 6.4.8 \cite{Morrey2008}
(see Lemma 2 in \cite{Dolzmann1995} for a concise proof). The third
estimate is not mentioned explicitly in these references but it is clear
that the arguments used to prove \eqref{divest} can be used to prove
\eqref{curlest} as well.
\begin{prop}
  If \eqref{geom} holds, then for any $q \in
  W^{2,p}(\Omega) \cap W_0^{1,p}(\Omega)$, $1 < p <\infty$:
  \begin{equation}
   ||q||_{L^p(\Omega)} +
   ||\nabla q||_{L^p(\Omega)}
   + ||\nabla^2 q||_{L^p(\Omega)}
    \leq  C(K) ||\Delta q||_{L^p(\Omega)}.
   \label{lpest}
  \end{equation}
  If $\Delta q = \div F + g$ for a vector field
  $F$ and a function $g$, and $q \in W^{1,p}_0(\Omega)$, then:
  \begin{equation}
   ||\nabla q||_{L^p(\Omega)} \leq C(K) \big( ||F||_{L^p(\Omega)} +
   ||g||_{L^1(\Omega)}\big).
   \label{divest}
  \end{equation}
  Similarly, if $\beta$ is a vector field and $\Delta \beta = \curl \gamma
  +\rho$ for vector fields $\gamma,\rho$ and $\beta \in W^{1,p}_0(\Omega)$,
  then:
  \begin{equation}
   ||\nabla \beta||_{L^p(\Omega)} \leq C(K)
   \big( || \gamma ||_{L^p(\Omega)} + ||\rho||_{L^1(\Omega}\big).
   \label{curlest}
  \end{equation}
\end{prop}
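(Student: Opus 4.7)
The plan is to verify the three inequalities by invoking standard elliptic $L^p$ theory, using Lemma \ref{bdyreg} to convert the curvature and injectivity bound \eqref{geom} into a uniform exterior sphere condition with radius depending only on $K$. This is precisely the geometric hypothesis under which the classical $W^{2,p}$ Dirichlet estimates of Calder\'on-Zygmund type hold with constants depending only on $K$, $p$, and $\Vol(\Omega)$.

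For \eqref{lpest}, the estimate $||\nabla^2 q||_{L^p(\Omega)} \leq C(K) ||\Delta q||_{L^p(\Omega)}$ is exactly Theorem 9.14 of \cite{gilbarg2001} applied to our domain, while the $||q||_{L^p}$ and $||\nabla q||_{L^p}$ contributions are absorbed by Poincar\'e's inequality since $q$ vanishes on $\pa \Omega$.

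For \eqref{divest}, I would argue by duality. Given a test field $\psi \in L^{p'}(\Omega;\R^3)$, let $\phi \in W^{1,p'}_0(\Omega)$ solve the auxiliary problem $\Delta \phi = \div \psi$; two integrations by parts, using $q|_{\pa \Omega} = \phi|_{\pa \Omega} = 0$ together with $\Delta q = \div F + g$, yield the identity
\begin{equation}
 \int_\Omega \nabla q \cdot \psi \, dx = \int_\Omega F \cdot \nabla \phi \, dx - \int_\Omega g \, \phi \, dx.
\end{equation}
Bounding the first term by $||F||_{L^p}||\nabla \phi||_{L^{p'}}$ and the second by $||g||_{L^1}||\phi||_{L^\infty}$ then gives \eqref{divest} once one establishes the dual estimates $||\nabla \phi||_{L^{p'}} + ||\phi||_{L^\infty} \leq C(K) ||\psi||_{L^{p'}}$; the first follows from a Calder\'on-Zygmund bound for the Poisson-type problem $\Delta\phi = \div \psi$, and the second from Sobolev embedding once the $W^{2,p'}$ or $W^{1,p'}$ bound on $\phi$ is in hand. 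This is the strategy of Lemma 2 in \cite{Dolzmann1995}.

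For \eqref{curlest}, I would write $(\curl \gamma)_i = \pa^j(\epsilon_{ijk}\gamma^k)$, so that each component of $\curl \gamma$ is the divergence of a vector field whose $L^p$ norm is controlled by $||\gamma||_{L^p}$, and then apply \eqref{divest} componentwise to $\beta_i$ with $F^j = \epsilon_{ijk}\gamma^k$ and $g = \rho_i$. The main obstacle throughout is the bookkeeping of verifying that the constants depend on the geometry of $\pa \Omega$ only through $K$ (via the exterior sphere radius from Lemma \ref{bdyreg}) and $\Vol(\Omega)$; this dependence is implicit in \cite{gilbarg2001,Dolzmann1995} but must be tracked carefully in our setting.
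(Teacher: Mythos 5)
The paper offers no detailed proof — it simply cites Theorem 9.14 of \cite{gilbarg2001} together with Lemma \ref{bdyreg} for \eqref{lpest}, and Morrey's Theorem 6.4.8 / Lemma 2 of \cite{Dolzmann1995} for \eqref{divest}, with \eqref{curlest} obtained ``by the same arguments.'' Your proposal follows exactly this route, supplying the duality argument that underlies the Dolzmann--M\"uller proof and the curl-to-divergence reduction that the paper only gestures at, so the approaches are essentially the same.

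One small imprecision worth fixing: in the dual estimate you claim $||\phi||_{L^\infty} \leq C(K)||\psi||_{L^{p'}}$ ``from Sobolev embedding once the $W^{2,p'}$ or $W^{1,p'}$ bound on $\phi$ is in hand.'' For $\Delta\phi = \div\psi$ with $\psi \in L^{p'}$ you only get a $W^{1,p'}$ bound (not $W^{2,p'}$, since $\div\psi$ lies in $W^{-1,p'}$), and $W^{1,p'}_0(\Omega) \hookrightarrow L^\infty(\Omega)$ in three dimensions requires $p' > 3$, i.e.\ $p < 3/2$. This is not a defect of your method: the inequality $||\nabla q||_{L^p} \leq C||g||_{L^1}$ is in fact \emph{false} for $p \geq 3/2$ (take $g$ concentrating at a point, so $\nabla q \sim |x|^{-2}$), so the $g$- and $\rho$-terms in \eqref{divest} and \eqref{curlest} can only be meaningful in that restricted range. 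The paper does not flag this either, and the applications in the text use these estimates only with $g = \rho = 0$, where the restriction is vacuous; but a careful writeup should either state the range $1 < p < 3/2$ for the $L^1$ terms or note that they are only used with $g = \rho = 0$.
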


We will also need the following
``Bernstein'' maximum principle, which is just the maximum
principle for subharmonic functions combined with the fact that
if $\Delta f = 0$, then $\Delta |\nabla f|^2 = 2|\nabla^2 f|^2 \geq 0$:
\begin{prop}
 Suppose $f \in C^3(\Omega)$ is harmonic. Then
 \begin{equation}
  ||\nabla f||_{L^\infty(\Omega)} \leq ||\nabla f||_{L^\infty(\pa \Omega)}.
  \label{bernstein}
 \end{equation}
\end{prop}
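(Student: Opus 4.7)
The plan is to verify that $|\nabla f|^2$ is subharmonic and then invoke the weak maximum principle. Since the Lagrangian metric $g$ on $\Omega$ is the pullback of the Euclidean metric on $\D_t$ via the diffeomorphism $y \mapsto x(t,y)$, the computation can be carried out in Eulerian coordinates, where the Christoffel symbols vanish and covariant derivatives reduce to partial derivatives. In that setting, a direct expansion gives
\begin{equation}
 \Delta(|\nabla f|^2) = \sum_{i,j} \partial_i\partial_i\bigl((\partial_j f)^2\bigr)
 = 2\sum_{i,j}(\partial_i\partial_j f)^2 + 2\sum_j (\partial_j f)\,\partial_j(\Delta f).
\end{equation}
The hypothesis $\Delta f = 0$ kills the second term, leaving $\Delta(|\nabla f|^2) = 2|\nabla^2 f|^2 \geq 0$.

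Next I would apply the weak maximum principle to the continuous, subharmonic function $|\nabla f|^2$. Since $f \in C^3(\Omega)$ (and by a standard density/approximation argument or the assumption that $f$ is continuous up to $\pa \Omega$, which is the context in which this estimate is used in the paper), $|\nabla f|^2 \in C^2(\Omega) \cap C(\overline{\Omega})$. The weak maximum principle then yields
\begin{equation}
 \sup_{\Omega} |\nabla f|^2 \leq \sup_{\pa \Omega} |\nabla f|^2,
\end{equation}
and taking square roots gives exactly \eqref{bernstein}.

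There is essentially no obstacle here; the only subtle point is the justification of the identity $\Delta(|\nabla f|^2) = 2|\nabla^2 f|^2$ in the Riemannian framework the paper has set up. The Bochner formula in general produces an extra Ricci curvature term, but this term is absent because $(\Omega,g)$ is by construction isometric to a domain in Euclidean $\R^3$ and hence flat. Once this is noted, the argument reduces to two textbook facts: subharmonicity of $|\nabla f|^2$ for harmonic $f$ in Euclidean space, and the weak maximum principle for subharmonic functions on a bounded domain.
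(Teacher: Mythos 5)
Your proof is correct and follows exactly the route the paper itself indicates: compute $\Delta|\nabla f|^2 = 2|\nabla^2 f|^2 \geq 0$ for harmonic $f$ and invoke the weak maximum principle for subharmonic functions. The only addition beyond the paper's sketch is your (correct and worthwhile) remark that the Bochner Ricci term is absent because $(\Omega,g)$ is isometric to a Euclidean domain.
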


We now note the following simple consequence of \eqref{curlest} and \eqref{bernstein}.
Writing $u = u_0 + u_1$ as in \eqref{u0def}-\eqref{u0def2} and
using H\"{o}lder's inequality implies that:
\begin{align}
 ||\nabla u||_{L^p(\Omega)} &\leq C(K, \Vol(\Omega) )\big( ||\omega||_{L^\infty(\Omega)}
 + ||\tn U||_{L^\infty(\pa \Omega)} + ||\dn U||_{L^\infty(\Omega)}\big)\\
 &\leq C(K, \Vol (\Omega)) \A,
 \label{lplinfty}
\end{align}
with $\A$ defined by \eqref{Adef}.

\section{A nonlinear estimate}
\label{nlinsec}

If $v \in L^2(\R^3)$
is a vector
field with $\div v = 0$ and $\curl v = \omega$, then
$\Delta v = \curl \omega$ and using the fact that the Riesz transform
is bounded on $L^p(\R^3)$ for $1 < p <\infty$ leads to:
\begin{equation}
 ||\nabla v||_{L^p(\R^3)} \leq C ||\omega||_{L^p(\R^3)}.
 \label{}
\end{equation}
Unfortunately, such an estimate fails when $ p = \infty$. The
key idea in \cite{Beale1984} is that there is a replacement for
the $p = \infty$ case if we are willing to control more derivatives
of $v$:
\begin{equation}
 ||v||_{L^\infty(\R^3)} \leq C \big((1 + \log^+ ||v||_{H^{r}(\R^3)}) ||\omega||_{L^\infty(\R^2)}
 + ||\omega||_{L^2(\R^3)}\big),
 \label{}
\end{equation}
for any $r > 3/2$, with $\log^+(s) = \max(0, \log(s))$.

There is a version of this inequality that holds on our domain
$\D_t$ as well, provided that we have control over the boundary:
\begin{theorem}
  \label{nlinprop}
  Suppose that \eqref{geom} holds.
  If $\Delta v = \curl \alpha$ in $\D_t$ and $v = 0$ on $\pa \D_t$, then for any $s > 3$:
 \begin{equation}
  ||\nabla v||_{L^\infty(\D_t)} \leq C(K) \bigg(
  \big(1 + \log^+ ||\alpha||_{H^{s-1}(\D_t)}\big)
  ||\alpha ||_{L^\infty(\D_t)} + 1\bigg),
  \label{}
 \end{equation}
 where $\log^+(q) = \max(0, \log(q))$.
\end{theorem}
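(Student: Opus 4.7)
The proof adapts the Beale--Kato--Majda argument from $\R^3$ to the bounded domain $\D_t$ via the Dirichlet Green's function. Let $G(x,y)$ denote the Green's function of the Dirichlet Laplacian on $\D_t$; by symmetry $G(x,y)=G(y,x)$, so $G(x,\cdot)\equiv 0$ on $\pa\D_t$ for each $x\in\D_t$ (and in particular $\pa_\ell^x G(x,\cdot)\equiv 0$ on $\pa\D_t$, since $G(x,y_0)$ vanishes identically in $x$ when $y_0\in\pa\D_t$). Starting from $v_i(x)=-\int_{\D_t}G(x,y)(\curl\alpha)_i(y)\,dy$, integrating by parts in $y$ and then differentiating in $x$ gives
\begin{equation}
\pa_\ell v_i(x)=\epsilon_{ijk}\int_{\D_t}\pa_\ell^x\pa_j^y G(x,y)\,\alpha_k(y)\,dy.
\end{equation}
The uniform exterior/interior sphere condition from Lemma~\ref{bdyreg}, combined with the standard decomposition $G=\Phi+H$ of $G$ into the Newtonian potential $\Phi(x-y)=-(4\pi|x-y|)^{-1}$ and its harmonic correction $H$, yields the pointwise bounds $|\pa_\ell^x G(x,y)|\leq C(K)|x-y|^{-2}$ and $|\pa_\ell^x\pa_j^y G(x,y)|\leq C(K)|x-y|^{-3}$.

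Fix a smooth radial cutoff $\chi$ with $\chi\equiv 1$ on $\{|z|\leq 1\}$ and supported in $\{|z|\leq 2\}$, set $\chi_r(z)=\chi(z/r)$, and decompose $\pa_\ell v_i=I_r+J_r$ according to whether the integrand is multiplied by $\chi_r(x-y)$ or $1-\chi_r(x-y)$. For the far piece the kernel bound immediately gives
\begin{equation}
|J_r(x)|\leq C(K)\,\|\alpha\|_{L^\infty(\D_t)}\int_r^{\mathrm{diam}\,\D_t}\rho^{-1}\,d\rho \leq C(K,\Vol\D_t)\bigl(1+\log(1/r)\bigr)\,\|\alpha\|_{L^\infty(\D_t)}.
\end{equation}
For the near piece, integrate by parts in $y$ to shift $\pa_j^y$ off $G$; the boundary contribution vanishes because $\pa_\ell^x G(x,\cdot)\equiv 0$ on $\pa\D_t$, leaving
\begin{equation}
I_r(x)=-\epsilon_{ijk}\int_{\D_t}\pa_\ell^x G(x,y)\bigl[\chi_r(x-y)\,\pa_j\alpha_k(y)+(\pa_j^y\chi_r(x-y))\,\alpha_k(y)\bigr]\,dy.
\end{equation}
The cutoff-derivative summand is supported in the shell $\{r\leq|x-y|\leq 2r\}$, and using $|\pa\chi_r|\leq C/r$ together with $|\pa_\ell^x G|\lesssim|x-y|^{-2}$ it is bounded by $C(K)\|\alpha\|_{L^\infty(\D_t)}$. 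The remaining summand is estimated by H\"older with exponents $(6/5,6)$: one has $\||x-\cdot|^{-2}\|_{L^{6/5}(B_{2r})}\leq Cr^{1/2}$, and Sobolev embedding $H^{s-2}(\D_t)\hookrightarrow L^6(\D_t)$ (valid for $s>3$) gives $\|\nabla\alpha\|_{L^6(\D_t)}\leq C\|\alpha\|_{H^{s-1}(\D_t)}$, producing a bound of $C(K)r^{1/2}\|\alpha\|_{H^{s-1}(\D_t)}$.

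Putting the pieces together,
\begin{equation}
|\pa v(x)|\leq C(K)\bigl[(1+\log(1/r))\,\|\alpha\|_{L^\infty(\D_t)}+r^{1/2}\,\|\alpha\|_{H^{s-1}(\D_t)}\bigr],
\end{equation}
and optimizing with $r=\min(\mathrm{diam}\,\D_t,(\|\alpha\|_{L^\infty}/\|\alpha\|_{H^{s-1}})^{2})$ yields the claimed logarithmic estimate; the regime in which $\|\alpha\|_{H^{s-1}}$ is small is absorbed by standard elliptic regularity $\|\nabla v\|_{L^\infty}\leq C\|v\|_{H^s}\leq C'\|\alpha\|_{H^{s-1}}$, which contributes the additive $+1$. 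The main technical obstacle is establishing the pointwise Green's function bounds with constants depending only on $K$, which requires controlling the harmonic correction $H(x,y)$ uniformly as $x$ or $y$ approach $\pa\D_t$; this is precisely where the quantitative exterior sphere radius from Lemma~\ref{bdyreg} enters. A secondary subtlety, namely the legitimacy of the near-piece integration by parts when $x$ lies close to $\pa\D_t$, is resolved by the vanishing of $\pa_\ell^x G(x,\cdot)$ on $\pa\D_t$ rather than by any property of the cutoff.
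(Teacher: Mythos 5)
Your argument is essentially the paper's: represent $v$ via the Dirichlet Green's function, split at scale $r$, use $|\nabla_x\nabla_y G|\lesssim|x-y|^{-3}$ on the far piece to get the logarithm, integrate by parts on the near piece and control $\nabla_x G$ against $\nabla\alpha\in L^6$ by H\"older and Sobolev, then optimize in $r$; the cutoff-derivative term and the kernel estimates from Gr\"uter--Widman appear in the same way. The only cosmetic difference is that you use a convolution cutoff $\chi_r(x-y)$ and formally write $\pa_\ell v_i=I_r+J_r$ before decomposing (so that $I_r$ alone involves the non-integrable kernel $\pa_\ell^x\pa_j^y G$), whereas the paper splits $v$ first with a cutoff fixed at $x$ and only then differentiates, keeping every intermediate integral absolutely convergent; reordering your steps in that way removes the issue and the estimates and conclusion agree.
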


As in \cite{Beale1984} and \cite{Ferrari1993}, the proof relies on
estimates for the Green's functions for the Laplacian.
We will use the following result, which follows
from Theorems 1.1, 1.3, and 3.3 in \cite{GW1982}. The
result in \cite{GW1982} assumes that the domain satisfies the
uniform exterior sphere condition, which holds if \eqref{geom} holds, by Lemma
\ref{bdyreg}.

\begin{theorem}
  If $\D_t$ satisfies \eqref{geom}, there is a unique function
 $G: \D_t \times \D_t \to \R \cup \{\infty\}$ so that $G \geq 0$,
 $G(x,y) = G(y,x)$ and so that the following properties hold:
 \begin{itemize}
  \item If $y \in \D_t$, then for any $r > 0$:
 \begin{equation}
  G(\cdot, y) \in W^{2,1}(\D_t \setminus B(r, y)) \cap H^1_0(\D_t),
  \label{}
 \end{equation}
 \item for each $\varphi \in C_c^\infty(\Omega)$ we have:
 \begin{equation}
  \int_{\D_t} G(x,y) \Delta \varphi(y) =
  - \int_{\D_t} \delta^{ij} \nabla_{y^i} G(x,y) \nabla_{y^j}  \varphi(y)\, dy = \varphi(x),
  \label{repform1}
 \end{equation}
and
 \item
 if the bound \eqref{geom} holds, then
 for all $x,y \in \D_t$ with $x \not= y$:
  \begin{align}
   |G(x, y)| &\leq C |x-y|^{-1},\label{pw1}\\
   |\nabla G(x,y)| &\leq C(K) |x-y|^{-2},\label{pw2}\\
   |\nabla_x \nabla_y G(x,y)| &\leq C(K) |x-y|^{-3}.
   \label{pw3}
  \end{align}
 \end{itemize}
\end{theorem}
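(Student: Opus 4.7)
The approach is to deduce the theorem directly from Theorems 1.1, 1.3, and 3.3 of \cite{GW1982}, the only nontrivial input being the verification that the hypotheses of those results hold with constants depending only on $K$. Under \eqref{geom}, Lemma \ref{bdyreg} guarantees that $\D_t$ satisfies the uniform exterior sphere condition with a radius $r_0 = r_0(K) > 0$; this is the geometric input required by \cite{GW1982}, and the constants appearing in their estimates depend only on $r_0$ (and on the diameter of $\D_t$, which is harmless since the initial domain is bounded and transported by a volume-preserving flow).

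For existence, uniqueness, nonnegativity, symmetry, and the $H^1_0$ regularity, I would follow the construction of Theorem 1.1 of \cite{GW1982}: one obtains $G(\cdot,y)$ as a limit of Green's functions $G_\ve$ for smooth approximating domains exhausting $\D_t$. Nonnegativity passes to the limit from the maximum principle applied to each $G_\ve$; symmetry passes to the limit from the integration-by-parts identity for $G_\ve(x,\cdot)$ and $G_\ve(y,\cdot)$; and uniqueness follows because the difference of any two candidates lies in $H^1_0(\D_t)$ and is harmonic, hence vanishes. The $W^{2,1}$ regularity on $\D_t \setminus B(r,y)$ follows from interior $W^{2,p}$ estimates applied to the harmonic function $G(\cdot,y)$ on $\D_t \setminus \overline{B(r/2,y)}$, and the representation formula \eqref{repform1} follows by passing to the limit in the analogous identity for $G_\ve$ tested against $\varphi \in C_c^\infty(\D_t)$.

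The pointwise bound \eqref{pw1} is classical: comparing $G(\cdot,y)$ with $c|x-y|^{-1}$, both being nonnegative harmonic functions on $\D_t\setminus\{y\}$ with matching singular behavior, the maximum principle on $\D_t \setminus \overline{B(\rho,y)}$ (letting $\rho \to 0$) yields the bound, since $G$ vanishes on $\pa\D_t$. The gradient bounds \eqref{pw2} and \eqref{pw3} are the content of Theorem 3.3 in \cite{GW1982} and constitute the main technical step. After rescaling so that $|x-y| \sim 1$, interior gradient estimates for the harmonic function $G(\cdot,y)$ (e.g.\ the Bernstein bound \eqref{bernstein} applied on a ball of radius $\sim |x-y|/2$ inside $\D_t$) handle the region away from $\pa\D_t$. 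Close to the boundary, the exterior tangent ball at each point of $\pa \D_t$ supplied by Lemma \ref{bdyreg} provides a barrier showing that $G(\cdot,y)$ vanishes linearly in the distance to $\pa\D_t$, which upgrades \eqref{pw1} to a pointwise Lipschitz bound with constant depending only on $r_0(K)$. The mixed bound \eqref{pw3} is then obtained by repeating the gradient argument in the $y$ variable, using the symmetry of $G$ together with \eqref{pw2}.

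I expect the main obstacle to be verifying that these boundary Lipschitz estimates hold with constants depending only on $K$: since \eqref{geom} gives no $C^{1,\alpha}$ control on $\pa \D_t$, one cannot simply invoke Schauder boundary estimates for the Dirichlet problem, and must instead rely on the barrier construction of \cite{GW1982} tailored to the uniform exterior sphere condition. All other ingredients (maximum principle comparison, interior Bernstein, symmetry) are standard, so the proof is really an application of \cite{GW1982} with the single geometric input provided by Lemma \ref{bdyreg}.
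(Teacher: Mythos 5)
Your proposal takes exactly the paper's approach: the theorem is invoked as an application of Theorems 1.1, 1.3, and 3.3 of \cite{GW1982}, with the sole verification required being the uniform exterior sphere condition, which the paper (and you) supply via Lemma \ref{bdyreg} under assumption \eqref{geom}. The remainder of your writeup is a correct sketch of the internals of \cite{GW1982} (approximating domains, maximum-principle comparison, barrier argument near the boundary, symmetry for the mixed gradient bound), which the paper does not reproduce; the one small slip is the parenthetical claim that volume preservation controls the diameter of $\D_t$, which is not literally true, but the estimates \eqref{pw1}--\eqref{pw3} are local in $|x-y|$ and do not actually require a diameter bound.
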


Using this result, we can now provide the
\begin{proof}[Proof of Theorem \ref{nlinprop}]

We argue nearly exactly as in \cite{Beale1984}.
Fix $x \in \D_t$ and let $\delta > 0$ be a parameter to be determined
later. Let
$\chi_\delta$ be a smooth function so that $\chi_\delta(z) = 1$ when
$|z-x| < \delta$, $\chi_\delta(z) = 0$ when $|z-x| \geq 2\delta$,
and so that $|\nabla \chi_\delta| \leq \frac{C}{\delta}$.
By \eqref{repform1} we have:
\begin{equation}
 v_i(x) = \int_{\D_t} G(x,y)\curl \alpha_i(y)\, dy
 = \int_{\D_t} G(x,y) (1-\chi_\delta(y)) \curl \alpha_i(y)\, dy
 + \int_{\D_t} G(x,y) \chi_\delta(y) \curl \alpha_i(y)\, dy,
 \label{repform}
\end{equation}
We now re-write \eqref{repform} as:
\begin{align}
 v_i(x) &= \int_{\D_t}\K_i^j(x,y) (1- \chi_\delta(y)) \alpha_j(y)\, dy
 +\int_{\D_t} G(x,y) \delta_{i\ell} \epsilon^{\ell jk}(\nabla_k \chi_\delta(y))\alpha_j(y)\, dy
 + \int_{\D_t} G(x,y) \chi_\delta(y) \curl \alpha_i(y)\, dy\\
 &\equiv v_A(x) + v_B(x) + v_C(x),
 \label{}
\end{align}
where $\epsilon$ is the fully anti-symmetric symbol normalized
by $\epsilon^{123} = 1$ and $\K^j_i(x,y) = \delta_{i\ell} \epsilon^{j \ell k} \nabla_{y^k}G(x,y)$.

Using $|\nabla_x \K(x,y)| \leq |\nabla_x \nabla_y G(x,y)| \leq C(K) |x-y|^{-3}$,
we have:
\begin{align}
 |\nabla v_A(x)| &\leq C(K) ||\alpha||_{L^\infty(\D_t)}
 \bigg(
 \int_{ \{ \delta \leq |x-y| \leq  1\}\cap \D_t} |x-y|^{-1} dy
 +\int_{ \{ 1\leq |x-y|\}\cap \D_t} |x-y|^{-1} dy\bigg)\\
 &\leq C(K) \bigg( C_1 ||\alpha||_{L^\infty(\D_t)} - \log\delta ||\alpha||_{L^\infty(\D_t)}\bigg),
\end{align}
where $C_1 = C_1(\Vol \D_t)$.

Since $|\nabla \chi_\delta| \leq C\delta^{-1}$ and $\nabla \chi_\delta$ is
supported on the annulus $\delta \leq |x-y| \leq 2\delta$, and since
$|\nabla G(x,y)| \leq C(K) |x-y|^{-2}$ we have:
\begin{equation}
 |\nabla v_B(x)| \leq C(K) \delta^{-1} ||\alpha||_{L^\infty(\D_t)}
 \int_{\delta \leq |x-y|\leq 2\delta} |x-y|^{-2} \, dy\leq
 C(K)||\alpha||_{L^\infty(\D_t)}.
 \label{}
\end{equation}

Finally, to control $\nabla v_C(x)$, we use
H\"{o}lder's inequality and then Sobolev's inequality:
\begin{align}
  |\nabla v_C(x)| \leq \bigg|\int_{D_t} \nabla_x G(x,y) \chi_\delta(y)
  \nabla_{y} \alpha(y) dy\bigg|
  &\leq C(K)\bigg( \int_{\{0\leq |x-y| \leq \delta\}} |x-y|^{-12/5} \, dy
  \bigg)^{5/6}
  ||\nabla \alpha||_{L^6(\D_t)}\\
  &\leq C(K) \delta^{1/2}||\alpha||_{H^2(\D_t)}.
 \label{}
\end{align}
Therefore we have shown:
\begin{equation}
|\nabla v (x)| \leq C(K, \Vol \D_t)\big( (1 - \log\delta)
||\alpha||_{L^\infty(\D_t)} + \delta^{1/2} ||\alpha||_{H^2(\D_t)}\big).
\label{}
\end{equation}
We now take $\delta^{-1/2} = \min(1, ||\alpha||_{H^2(\D_t)})$, so that the
above becomes:
\begin{equation}
|\nabla v(x)| \leq C(K, \Vol \D_t) \big( (1 + \log||\alpha||_{H^2(\D_t)})||\alpha||_{L^\infty(\D_t)}
+1\big)
\label{}
\end{equation}
and noting that by \eqref{mass}, $\Vol \D_t = \Vol\D_0$, this
completes the proof.
\end{proof}
\section{Energy estimates}
\label{enestsec}

The results in this section are nearly identical to those in \cite{Christodoulou2000}.
The only difference is that we are slightly more explicit about the lower-order
terms that occur in the computations.

We define:
\begin{equation}
 E_r(t) = \int_{\Omega} \delta^{ij}\Pi^{IJ}(\nabla_I^r u_i) (\nabla_J^r u_j)\, dx
 + \int_{\pa \Omega} \Pi^{IJ}(\nabla_I^r p) (\nabla_J^r p)  |\nabla p|^{-1}
 dS,
 \label{endef}
\end{equation}
as well as:
\begin{equation}
 K_r(t) = \int_{\D_t} |\nabla^{r-1} \omega|^2 \, dx,
 \label{}
\end{equation}
and:
\begin{equation}
 \E_r(t) = E_r(t) + K_r(t).
 \label{}
\end{equation}
We also write:
\begin{align}
 \E_0(t) = \int_{\D_t} |u|^2\,dx,
 \label{}
\end{align}
and a simple calculation shows that $\E_0$ is conserved.
We will estimate these quantites assuming that the following bounds hold
on $\pa\D_t$:
\begin{align}
 |\theta| + \frac{1}{\iota_0} &\leq K,\label{kassump}\\
 |\nabla p| &\geq \delta > 0
 \label{deltassump}
\end{align}

Our estimates will involve the following quantity:
\begin{equation}
 \A =
 ||\omega||_{L^\infty(\D_t)} +
 ||\nabla u||_{L^\infty(\pa \D_t)} +
 ||\dn U||_{L^\infty(\pa \D_t)},
 \label{Adef}
\end{equation}
where $U = u|_{\pa \D_t}$.
We then have the following energy estimates:
\begin{prop}
  \label{enestprop}
  If the assumptions \eqref{kassump}-\eqref{deltassump} hold,
  then for $r = 1,2,3$:
  \begin{equation}
   \frac{d}{dt} \E_r \leq
   C(K, \delta^{-1})
   \big(||\nabla u||_{L^\infty(\D_t)}
   +
 ||\nabla_N D_t p||_{L^\infty(\pa \D_t)}
   + \A
   + \A^2 \big) \,
   \sum_{r = 0}^3 \E_r
   \label{enlow}
  \end{equation}
  and for $r \geq 4$,
  there is a polynomial $P$ so that:
  \begin{equation}
   \frac{d}{dt} \E_r
   \leq C(K, \delta^{-1})
   \big(||\nabla u||_{L^\infty(\D_t)} +
   ||\nabla_N D_t p||_{L^\infty(\pa \D_t)} + \A
   + \A^2\big)\, \E_r\, P(\E_{r-1},...,
   \E_0).
   \label{enhigh}
  \end{equation}
\end{prop}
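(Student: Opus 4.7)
The strategy is to reproduce the Christodoulou--Lindblad energy estimate from \cite{Christodoulou2000} for the three pieces $E_r$, $K_r$ making up $\E_r$, but with the lower-order $L^\infty$ quantities that appear on the right-hand side rewritten in terms of $\A$, $\A^2$, and $\|\nabla_N D_t p\|_{L^\infty(\pa\D_t)}$ using the Hodge decomposition $u = u_0 + u_1$, the maximum principle \eqref{bernstein}, the nonlinear log estimate of Theorem \ref{nlinprop}, and the interior elliptic estimates \eqref{lpest}--\eqref{curlest}.

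First I would differentiate $E_r$ in Lagrangian coordinates and use the commutator formula \eqref{commform} together with the momentum equation $D_t u_i = -\nabla_i p$. The principal term after integrating by parts is
\begin{equation}
  -\int_{\Omega}\Pi^{IJ}(\nabla^r_I u^i)\nabla^r_J \nabla_i p\,dx
  = -\int_{\pa\Omega}\Pi^{IJ}N_i(\nabla^r_I u^i)\nabla^r_J p\,dS
  + \int_{\Omega}\nabla_i\bigl(\Pi^{IJ}\nabla^r_I u^i\bigr)\nabla^r_J p\,dx,
\end{equation}
and the boundary piece is designed to cancel the half of $\frac{d}{dt}\!\int_{\pa\Omega}\Pi^{IJ}(\nabla^r_I p)(\nabla^r_J p)|\nabla p|^{-1}$ that comes from applying $D_t$ to the tangential factor $\Pi^{IJ}$ (this uses $N\cdot D_t u = -\nabla_N p$ on the boundary, which is exactly where the Taylor sign condition \eqref{deltassump} enters). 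The remaining interior term I would control by using that $\nabla_i\Pi^{IJ}$ produces at worst one factor of $\theta$ by \eqref{dgammabd}, and then using the pointwise inequality \eqref{pwlem} and the elliptic estimate \eqref{basicproj} to bring things back to $\E_r$ plus products of lower-order $\E$'s. For $K_r$ I would differentiate using the vorticity equation $D_t\omega = (\omega\cdot\nabla)u - (\nabla u)\omega$ and absorb commutators into $\|\nabla u\|_{L^\infty(\D_t)}\cdot K_r$ plus mixed terms bounded by $\A\cdot\E_r$.

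The commutator and remainder terms one encounters along the way involve $L^\infty$ norms of $\nabla u$, $\nabla p$, $\nabla^2 p$, and $\nabla D_t p$ — exactly the quantities Christodoulou--Lindblad bound by $M$ and $L$ in \eqref{bl1} and \eqref{bl4}. The core of this paper's contribution is to replace these by $\A$ and $\A^2$. For $\|\nabla u\|_{L^\infty(\D_t)}$ I would split $u = u_0+u_1$: the harmonic piece $u_1$ is controlled by $\|\nabla u\|_{L^\infty(\pa\D_t)}$ via \eqref{bernstein} (so by $\A$), while $u_0$ is estimated by Theorem \ref{nlinprop} (the log factor is harmless since it will be absorbed by the Osgood argument used in the proof of Theorem \ref{mainthm}). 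For $\|\nabla p\|_{L^\infty}$ and $\|\nabla^2 p\|_{L^\infty}$ I would use $\Delta p = -(\nabla^i u_j)(\nabla^j u_i)$ combined with \eqref{lpest} and H\"older, which yields bounds by $\|\nabla u\|_{L^\infty}\|\nabla u\|_{L^p}\le C(K)\A^2$ via \eqref{lplinfty}. For $\|\nabla D_t p\|_{L^\infty(\pa\D_t)}$ one uses $\Delta D_t p$, which is cubic in $u$ (this is the source of the $\A^2$ factor and of the need for the boundary term $\|\nabla_N D_t p\|_{L^\infty(\pa\D_t)}$ to close the estimate for $\|\nabla D_t p\|_{L^\infty}$ via \eqref{basicproj} and trace).

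Finally, for $r = 1,2,3$ the Sobolev and elliptic estimates give clean bounds on every lower-order factor by $\sum_{k\le 3}\E_k$, yielding \eqref{enlow}. For $r\ge 4$ the higher-order pressure and velocity terms produced by $[D_t,\nabla^r]$ and by $\nabla_i\Pi^{IJ}$ require interpolation (using \eqref{projest}, \eqref{trace}, \eqref{thetabd} to control $\tn^{r-2}\theta$ in terms of $\Pi\nabla^r p$ and lower) which I would package into the polynomial $P(\E_{r-1},\dots,\E_0)$ on the right-hand side of \eqref{enhigh}. The main obstacle in execution is the bookkeeping: ensuring that every $L^\infty$-norm of a derivative of $p$ or $u$ in the interior that arises from the commutators is traced back to either $\A$, $\A^2$, or $\|\nabla_N D_t p\|_{L^\infty(\pa\D_t)}$ — the Taylor sign condition and the factor $|\nabla p|^{-1}$ in $E_r$ make this delicate near the boundary, but the elliptic machinery of Section \ref{ellsec} is exactly what one needs.
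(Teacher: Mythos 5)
Your overall skeleton is right — differentiate $K_r$ using the vorticity equation, differentiate $E_r$ using the Christodoulou--Lindblad integration-by-parts machinery, and then control the commutator and $D_t p$ remainders using elliptic estimates — but you have misplaced the role of the decomposition $u = u_0 + u_1$ and the logarithmic estimate of Theorem \ref{nlinprop}, and this leads to a concrete error.

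The statement of Proposition \ref{enestprop} keeps $\|\nabla u\|_{L^\infty(\D_t)}$ as an \emph{explicit} term on the right-hand side, deliberately separate from $\A$ and $\A^2$. Nothing in the paper's proof of this Proposition uses $u = u_0 + u_1$, \eqref{bernstein}, or Theorem \ref{nlinprop}; those enter only later, in the proof of Theorem \ref{mainthm}, where \eqref{nlinintro} is used to convert $\|\nabla u\|_{L^\infty(\D_t)}\,\E_3$ into $\A\,\E_3(1+\log^+\E_3)$ so that the Osgood--type ODE closes. Inside the Proposition you must keep $\|\nabla u\|_{L^\infty(\D_t)}$ as is. Your claim that one gets
$\|\nabla u\|_{L^\infty(\D_t)}\,\|\nabla u\|_{L^p(\D_t)} \leq C(K)\A^2$
is false, because $\|\nabla u\|_{L^\infty(\D_t)}$ is not bounded by $\A$ (indeed, if it were, the right-hand sides of \eqref{enlow}--\eqref{enhigh} would not need it as a separate term); the correct bound from \eqref{lplinfty} is $\|\nabla u\|_{L^\infty(\D_t)}\,\A$, which has exactly the mixed structure the paper exploits. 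The point the paper is at pains to preserve --- and that your proposal obscures --- is that each estimate must be \emph{linear} in $\|\nabla u\|_{L^\infty(\D_t)}$ (compare the remark following the Proposition about (7.16) in \cite{Christodoulou2000}): a quadratic dependence would destroy the logarithmic Gronwall argument in Theorem \ref{mainthm}. This is the entire reason the paper must rework the Christodoulou--Lindblad bounds for $\nabla^{r-2}\Delta D_t p$ (Lemma \ref{dtpests}) so that one factor of $\|\nabla u\|_{L^\infty(\D_t)}$ is peeled off and the remaining cubic structure is estimated by $\A$ and the energies. Your sketch of the boundary cancellation for $E_r$ is fine in spirit (the paper imports it wholesale as Proposition 5.12 of \cite{Christodoulou2000} rather than rederiving it), and your identification of $\|\nabla_N D_t p\|_{L^\infty(\pa\D_t)}$ as the quantity needed to close the projected $D_t p$ estimates via \eqref{nabla2}/\eqref{projest} is correct. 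But the execution needs to drop the Theorem \ref{nlinprop} step entirely, track $\|\nabla u\|_{L^\infty(\D_t)}$ and $\A$ as independent quantities, and verify at each stage that only a single power of $\|\nabla u\|_{L^\infty(\D_t)}$ appears.
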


The proof of this estimate is nearly identical to the proof of
Theorem 7.1 in
in \cite{Christodoulou2000}, except that we need to ensure that the dependence on
$||\nabla u||_{L^\infty}$ is linear (compare with (7.16) in \cite{Christodoulou2000}).
The only
part of the argument that needs to be changed is the proof of the estimates for
derivatives of $D_t p$ on $\pa \D_t$. This is because we will ultimately bound
derivatives of $D_tp$ by derivatives of $\Delta D_tp$ and this is cubic
in $\nabla u$.

Before proving the above estimates, it is helpful to see what quantities
the energies bound. The following lemma is Lemma 7.3 in \cite{Christodoulou2000},
and relies on the elliptic estimates described in section \ref{ellsec}.
We need slightly different estimates to deal with the pressure and second fundamental
form depending on how many derivatives are present, because in the estimates
for $\E_3$,
we need all of our estimates to be linear in $||\nabla u||_{L^\infty(\Omega)}$.
\begin{lemma}
  \label{coerlem}
  We have:
  \begin{align}
   ||\nabla^r u||_{L^2(\Omega)}^2 &\leq C \E_r,
   && ||\Pi \nabla^r p||_{L^2(\pa \Omega)}^2
   \leq ||\nabla p||_{L^\infty(\pa \Omega)} \E_r,
   \label{ellu}
 \end{align}
 \begin{align}
  ||\nabla p||_{L^2(\pa \Omega)}^2 +
  ||\nabla^2 p||_{L^2(\pa\Omega)}^2
  \leq C(K, \Vol \Omega) \big(
  ||\nabla p||_{L^\infty(\pa \Omega)} + ||\nabla u||_{L^\infty(\Omega)}^2\big)
   \sum_{s = 0}^2
  \E_s
  \label{ellplow}
 \end{align}
 and:
  \begin{align}
   ||\nabla^r p||_{L^2(\pa \Omega)}^2
   + ||\nabla^r p||_{L^2(\Omega)}^2
  & \leq C(K, \Vol \Omega) \big( ||\nabla p||_{L^\infty(\pa \Omega)}
   + ||\nabla u||_{L^\infty(\Omega)}^2 \big)
   \sum_{s = 0}^r \E_s
   \label{ellp}
  \end{align}
  If the bound \eqref{deltassump} holds then:
  \begin{align}
   ||\theta||_{L^2(\pa \Omega)}^2 \leq C(\delta^{-1}) \E_2, &&
   ||\tn \theta||_{L^2(\pa \Omega)}^2 \leq
   C(K, \delta^{-1}, \Vol \Omega)\bigg(\E_3 + ||\nabla u||_{L^\infty(\D_t)}
   \sum_{s = 0}^{2} \E_s\bigg),
   \label{thetaless}
  \end{align}
  and for $r \geq 4$, there is a polynomial $P$ so that:
  \begin{equation}
   ||\tn^{r-2} \theta||_{L^2(\pa \Omega)}^2
   \leq C(K, \delta^{-1}) (1 + ||\nabla p||_{L^\infty(\pa \Omega)}
   + ||\nabla u||_{L^\infty(\Omega)}^2) P(\E_{r-1},...\E_0) \E_r.
   \label{thetaannoying2}
  \end{equation}
\end{lemma}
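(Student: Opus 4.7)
My overall plan is to read the first two bounds directly from the definition of $\E_r$, to combine the elliptic estimates of Section \ref{ellsec} with the identity $\Delta p = -(\nabla^i u_j)(\nabla^j u_i)$ to control $\nabla^r p$, and to convert these into estimates for $\tn^{r-2}\theta$ via the pointwise formulas \eqref{nabla2}--\eqref{tn1} and the general estimate \eqref{thetabd}, inducting on $r$. A recurring trick will be to factor $(\nabla_N p)^{-2} = (\nabla_N p)^{-1}|\nabla p|^{-1}$ on $\pa \Omega$ (using $|\nabla p| = |\nabla_N p|$ there, since $p = 0$), so that one factor of the inverse Taylor coefficient is absorbed into the natural weight $|\nabla p|^{-1}$ already present in $E_r$, avoiding any spurious $\|\nabla p\|_{L^\infty(\pa\Omega)}$ in the $\theta$ bounds.

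For \eqref{ellu}, the $\nabla^r u$ bound follows by combining the interior integrand of $E_r$ (which equals $|\nabla^r u|^2$ outside the boundary collar and $|\Pi \nabla^r u|^2$ inside), the curl term $K_r$, and the pointwise inequality \eqref{pwlem} applied to $\beta = \nabla^{r-1}u$, with $\div u = 0$ handling the divergence piece up to commutators that are absorbed into lower-order energies. The projection bound on $\Pi\nabla^r p$ is just $|\nabla p|^{-1} \geq \|\nabla p\|_{L^\infty}^{-1}$ applied to the boundary integrand. For \eqref{ellplow}--\eqref{ellp}, I apply \eqref{basicproj} with $q = p$ to reduce to $\|\Pi \nabla^r p\|_{L^2(\pa \Omega)}$ (already controlled) plus $\sum_{s \leq r-1}\|\nabla^s \Delta p\|_{L^2(\Omega)}$; a Moser inequality applied to $\Delta p = -(\nabla u)(\nabla u)$ yields $\|\nabla^s \Delta p\|_{L^2} \leq C\|\nabla u\|_{L^\infty(\Omega)}\sum_{k \leq s+1}\E_k^{1/2}$, keeping the $\|\nabla u\|_{L^\infty}$ dependence quadratic. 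For $r = 1$ I would use \eqref{basicproj2} in place of \eqref{basicproj}.

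For $\|\theta\|_{L^2(\pa \Omega)}^2 \leq C(\delta^{-1})\E_2$, the identity \eqref{nabla2} combined with $p = 0$ on $\pa\Omega$ gives pointwise $\theta = (\nabla_N p)^{-1}\Pi\nabla^2 p$, and the weight-splitting trick produces the estimate directly, since
\[
\int_{\pa\Omega}|\theta|^2\,dS = \int_{\pa\Omega}\frac{|\Pi\nabla^2 p|^2}{(\nabla_N p)^2}\,dS \leq \delta^{-1}\int_{\pa\Omega}|\Pi\nabla^2 p|^2|\nabla p|^{-1}\,dS \leq \delta^{-1}\E_2.
\]
For $\|\tn\theta\|_{L^2}^2$, apply the same reasoning to \eqref{tn1}: the principal piece $\Pi\nabla^3 p/\nabla_N p$ contributes $\delta^{-1}\E_3$, while the lower-order piece $\theta\,\widetilde{\otimes}\,\tn\nabla_N p/\nabla_N p$ is controlled by $\|\theta\|_{L^\infty} \leq K$ together with \eqref{ellplow} on $\|\nabla^2 p\|_{L^2(\pa\Omega)}$, producing the $\|\nabla u\|_{L^\infty}\sum_s \E_s$ remainder. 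For $r \geq 4$, apply \eqref{thetabd} with $q = p$ and $\varepsilon \sim \delta$: bound $\|\Pi\nabla^r p\|_{L^2}$ via \eqref{ellu}, bound each $\|\nabla^k p\|_{L^2(\pa\Omega)}$ via \eqref{ellp}, and handle $\|\tn^k \theta\|_{L^2}$ for $k \leq r-3$ by induction on $r$, assembling into the polynomial $P(\E_{r-1},\dots,\E_0)$ of \eqref{thetaannoying2}.

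The main technical obstacle will be the bookkeeping required to ensure that no power of $\|\nabla u\|_{L^\infty(\Omega)}$ higher than the square ever enters, which is what allows the energy inequality \eqref{enlow} to close at $r = 3$ with the $\A + \A^2$ right-hand side targeted in Proposition \ref{enestprop}. This is enforced by always placing exactly one of the two $\nabla u$ factors arising from $\Delta p = -(\nabla u)^2$ (or its derivatives, via Moser) into $L^\infty$, and by avoiding the crude bound $\|\Pi\nabla^r p\|_{L^2}^2 \leq \|\nabla p\|_{L^\infty}\E_r$ in the $\theta$ estimates, where the weight-splitting identity is essential to obtain the dependence on $\delta^{-1}$ alone.
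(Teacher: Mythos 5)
Your proposal is correct and follows essentially the same route as the paper's proof, which simply cites \eqref{pwlem}, \eqref{basicproj}--\eqref{basicproj2}, \eqref{nabla2}--\eqref{tn1}, \eqref{projest}, and \eqref{thetabd} in the same roles you assign them. The one genuinely useful thing you make explicit that the paper's text leaves implicit is the weight-splitting identity $(\nabla_N p)^{-2} = |\nabla_N p|^{-1}|\nabla p|^{-1}$ on $\pa\Omega$: without it, applying \eqref{nabla2} naively gives $\|\theta\|_{L^2}^2 \leq \delta^{-2}\|\Pi\nabla^2 p\|_{L^2}^2 \leq \delta^{-2}\|\nabla p\|_{L^\infty}\E_2$, which carries a spurious $\|\nabla p\|_{L^\infty}$ that the stated bound $C(\delta^{-1})\E_2$ does not have, so this observation is genuinely needed to prove the first estimate in \eqref{thetaless} as written. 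Two small remarks. First, your caveat about ``commutators absorbed into lower-order energies'' when applying \eqref{pwlem} with $\div u = 0$ is unnecessary: the paper works with covariant derivatives pulled back from flat Euclidean space, which commute exactly, so $\div(\nabla^{r-1}u) = \nabla^{r-1}\div u = 0$ identically. Second, your claim that the lower-order term $\theta\,\widetilde{\otimes}\,\tn\nabla_N p/\nabla_N p$ of \eqref{tn1} produces precisely the $\|\nabla u\|_{L^\infty}\sum\E_s$ remainder is not what your own chain of estimates delivers: bounding $\|\theta\|_{L^\infty}\leq K$, $|\nabla_N p|^{-1}\leq\delta^{-1}$, and then $\|\nabla^2 p\|_{L^2(\pa\Omega)}^2$ via \eqref{ellplow} gives a factor $(\|\nabla p\|_{L^\infty(\pa\Omega)} + \|\nabla u\|_{L^\infty(\Omega)}^2)$, not $\|\nabla u\|_{L^\infty}$ to the first power. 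The paper's stated remainder in \eqref{thetaless} appears to have the same imprecision (its proof also just cites \eqref{ellplow}), and it is harmless for the later energy estimates, but your write-up should not assert the cleaner form without an argument that actually produces it.
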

\begin{proof}
  The first estimate follows from \eqref{pwlem} and
  the second estimate follows from
  the definition of $\E_r$.
  The estimates in \eqref{ellplow} follow from the inequality
  \eqref{basicproj2} along with \eqref{nabla2} and \eqref{prespois},
  while the estimate \eqref{ellp} follows from \eqref{basicproj}
  \eqref{projest}.

  The first estimate in \eqref{thetaless} is just \eqref{nabla2} and the
  second estimate is \eqref{tn1} combined with the estimates
  \eqref{ellplow}.
   To prove the estimate \eqref{thetaannoying2},
   we combine
  \eqref{thetabd}, \eqref{ellp} and use the assumption
  \eqref{deltassump}:
  \begin{multline}
   ||\tn^{r-2} \theta||_{L^2(\pa \Omega)}^2
   \leq C(K,\delta^{-1})\big(1 + ||\nabla p||_{L^\infty(\pa \Omega)}
   + ||\nabla u||_{L^\infty(\Omega)}^2 \big)
   \bigg( ||\theta||_{L^\infty(\pa \Omega)}
   + \sum_{k \leq r-3} ||\tn^k \theta||_{L^2(\pa \Omega)}\bigg)
   \sum_{s = 0}^r \E_s
   \label{}
  \end{multline}
\end{proof}

Taking the divergence of \eqref{mom} and using the fact that
$[D_t, \nabla_i] = -(\nabla_i u^j)\nabla_j$, we have:
\begin{equation}
 \Delta p = -(\nabla_i u^j)(\nabla_j u^i).
 \label{prespois}
\end{equation}
Applying $D_t$ to both sides of this equation, a
calculation using \eqref{prespois} and
\eqref{commform} (see just below (6.14) in \cite{Christodoulou2000}) yields:
\begin{equation}
 \Delta D_t p = p_1 + p_2 + p_3,
 \label{dtprespois}
\end{equation}
with:
\begin{align}
 p_1 = 4g^{ab} g^{cd}(\nabla_a u_c) \nabla_b\nabla_d p, &&
 p_2 = 2 (\nabla_a u^d)(\nabla_d u^c)(\nabla_c u^a),&&
 p_3 = -(\Delta u^e)\nabla_e p.
 \label{}
\end{align}
Differentiating
\eqref{mom}, \eqref{prespois} and \eqref{dtprespois} gives
Lemma 6.1 of \cite{Christodoulou2000}:
\begin{lemma}
  \begin{align}
   |D_t \nabla^r u + \nabla^{r+1} p|
   + |D_t \nabla^{r-1} \curl u| + |\nabla^{r-1} \Delta p|
   &\leq C \sum_{s = 0}^{r-1}
   |(\nabla^{1+ s} u) (\nabla^{r-s} u)|, \label{hotdeul}\\
   |\Pi(D_t \nabla^{r}p + (\nabla^r u)\cdot \nabla p
   - \nabla^{r}D_t p)|
   &\leq C \sum_{s = 0}^{r-2} |\Pi \big( (\nabla^{1+s}u)\cdot
   \nabla^{r-s} p\big)|,
   \label{hotpbdy}
  \end{align}
  and
  \begin{multline}
   |\nabla^{r-2} \Delta D_t p - (\nabla^{r-2} \Delta u)\cdot \nabla p|
   \\
   \leq C \sum_{s = 0}^{r-2} |(\nabla^{1+s} u)(\nabla^{r-s} p)|
   + C \sum_{r_1 + r_2 + r_3 = r-2}
   |(\nabla^{1+r_1}u)(\nabla^{1+r_2}u)(\nabla^{1+r_3}u)|
   \label{hotdtp}
  \end{multline}

\end{lemma}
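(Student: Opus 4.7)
All three estimates are pointwise algebraic identities obtained by iteratively applying the commutator relation
\begin{equation*}
[D_t, \nabla_a] T = -(\nabla_a u^c)\nabla_c T,
\end{equation*}
which is the covariant extension of \eqref{commform} to tensors (valid in Lagrangian coordinates since covariant derivatives correspond to Eulerian partial derivatives under the change of variables $y \mapsto x(t,y)$), together with the three equations \eqref{mom}, \eqref{prespois}, and \eqref{dtprespois}. Expanding $[D_t, \nabla^r]$ by inserting this commutator $r$ times produces a sum in which each summand contains exactly one factor of the form $\nabla^{1+s}u$ for some $s \geq 0$.

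For \eqref{hotdeul}: apply $\nabla^r$ to \eqref{mom} and commute $D_t$ through, yielding $D_t \nabla^r u_i + \nabla^{r+1} p = [\nabla^r, D_t]u_i$, which by the expansion above is a sum of products $(\nabla^{1+s}u)(\nabla^{r-s}u)$ for $0 \leq s \leq r-1$. The estimate for $|D_t \nabla^{r-1}\curl u|$ follows the same way after first taking the curl of \eqref{mom}, which eliminates the pressure gradient. The estimate for $|\nabla^{r-1}\Delta p|$ follows by applying $\nabla^{r-1}$ to \eqref{prespois} and using Leibniz.

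For \eqref{hotpbdy}: the same commutator expansion applied to $p$ yields an identity of the form
\begin{equation*}
D_t \nabla^r p - \nabla^r D_t p = \sum_{s=0}^{r-1} c_{r,s}\, (\nabla^{1+s}u)(\nabla^{r-s} p),
\end{equation*}
where the $c_{r,s}$ are fixed combinatorial constants (and we have schematically suppressed index contractions). The term with $s = r-1$, in which the maximum number of derivatives lands on $u$, is precisely $-(\nabla^r u)\cdot \nabla p$, so adding $(\nabla^r u)\cdot\nabla p$ to the left-hand side cancels this top-order piece; projecting with $\Pi$ then leaves only the $0 \leq s \leq r-2$ terms and gives the stated bound. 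For \eqref{hotdtp}: apply $\nabla^{r-2}$ to \eqref{dtprespois} and Leibniz each $p_j$. The contribution from $p_1 = 4(\nabla u)(\nabla^2 p)$, together with the subleading pieces of $\nabla^{r-2} p_3$, forms the quadratic sum $(\nabla^{1+s}u)(\nabla^{r-s}p)$; the contribution from $p_2$ gives the cubic sum $\prod_{i=1}^3 \nabla^{1+r_i}u$ with $\sum r_i = r-2$; and the top-order piece of $\nabla^{r-2}p_3 = -\nabla^{r-2}((\Delta u)\cdot\nabla p)$ is exactly $-(\nabla^{r-2}\Delta u)\cdot\nabla p$, matching the quantity subtracted on the left.

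The main obstacle is purely combinatorial bookkeeping: one has to unfold the iterated commutator carefully and verify that the coefficient of the top-order term in $[D_t, \nabla^r] p$ is exactly $-1$, so that the cancellation in \eqref{hotpbdy} is clean. These calculations are routine but tedious and are essentially Lemma 6.1 of \cite{Christodoulou2000}.
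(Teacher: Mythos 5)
Your proposal is correct and follows the same route the paper takes: the paper simply cites Lemma~6.1 of \cite{Christodoulou2000} and says ``differentiating \eqref{mom}, \eqref{prespois} and \eqref{dtprespois},'' and your argument supplies exactly that computation, including the crucial observation that the top-order term in $[D_t,\nabla^r]p$ is precisely $-(\nabla^r u)\cdot\nabla p$ so that it cancels in \eqref{hotpbdy}, and that the top-order piece of $\nabla^{r-2}p_3$ is what is subtracted off in \eqref{hotdtp}. One small caution about the justification rather than the result: the formula $[D_t,\nabla_a]T=-(\nabla_a u^c)\nabla_c T$ is the Eulerian relation $[D_t,\partial_i]=-(\partial_i u^k)\partial_k$ of \eqref{commform} applied componentwise, which is the right way to do the bookkeeping since all quantities appearing are tensor norms; it is not literally the Lagrangian covariant commutator, which on a $(0,m)$-tensor reads $[D_t,\nabla_a]T_{b_1\cdots b_m}=-\sum_{j}(\nabla_a\nabla_{b_j}u^c)T_{b_1\cdots c\cdots b_m}$ (this follows from $D_t\Gamma^c_{ab}=\nabla_a\nabla_b u^c$), so the parenthetical ``covariant extension valid in Lagrangian coordinates'' as stated is not quite accurate, even though the final schematic estimate is the same either way.
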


The next ingredient we will need are the following
$L^2$ estimates for $\Delta D_t p$. These are similar to the estimates
in \cite{Christodoulou2000} except that we need to ensure that
$||\nabla u||_{L^\infty(\Omega)}$ appears
with the same homogeneity as $\nabla^{r-2}\Delta D_t p$:
\begin{lemma}
  \label{dtpests}
  For $r = 2, 3$:
  \begin{align}
   ||\nabla^{r-2} \Delta D_t p||_{L^2(\Omega)}^2
   + &\leq C(K, \Vol \Omega)
   ||\nabla u||_{L^\infty(\Omega)}^2 \A^2 \sum_{k = 0}^{r} \E_k,
   \label{dtplow}
  \end{align}
 and for $r \geq 4$:
 \begin{equation}
  ||\nabla^{r-2} \Delta D_t p||_{L^2(\Omega)}^2
  \leq C(K, Vol(\Omega)) ||\nabla u||_{L^\infty(\Omega)}^2
  \E_r \sum_{k = 0}^{r-1} \E_k  \label{dtphigh}
 \end{equation}
\end{lemma}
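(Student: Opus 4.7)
The plan is to start from the pointwise identity \eqref{hotdtp} and decompose $\nabla^{r-2}\Delta D_t p$ into three types of terms: a principal term $(\nabla^{r-2}\Delta u)\cdot\nabla p$, mixed terms $(\nabla^{1+s}u)(\nabla^{r-s}p)$ for $0\leq s\leq r-2$, and cubic terms $(\nabla^{1+r_1}u)(\nabla^{1+r_2}u)(\nabla^{1+r_3}u)$ with $r_1+r_2+r_3=r-2$. Taking $L^2(\Omega)$-norms then reduces matters to estimating each type by H\"older's inequality, with one factor placed in $L^\infty$ and the remaining factors in $L^p$ or $L^2$.

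For the principal term I would use $\div u=0$, which up to Christoffel corrections absorbed into $C(K)$ yields $\Delta u \sim \curl\omega$, so $\|\nabla^{r-2}\Delta u\|_{L^2(\Omega)}^2 \lesssim K_r(t)+\text{l.o.t.}\lesssim \sum_{k\leq r}\E_k$. To get a good $L^\infty$-bound for the companion factor $\nabla p$, solve the pressure equation $\Delta p = -(\nabla u)^2$ (see \eqref{prespois}), apply the Calder\'on--Zygmund estimate \eqref{lpest} with some $p>3$, and use Sobolev embedding and \eqref{lplinfty} to get
\begin{equation}
\|\nabla p\|_{L^\infty(\Omega)} \leq C(K)\|\nabla u\|_{L^\infty(\Omega)}\|\nabla u\|_{L^p(\Omega)} \leq C(K,\Vol\Omega)\|\nabla u\|_{L^\infty(\Omega)}\A,
\end{equation}
which supplies exactly the required $\|\nabla u\|_{L^\infty}^2\A^2$ factor after squaring.

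For the mixed and cubic terms I would again put one factor of $\nabla u$ in $L^\infty(\Omega)$, bound remaining factors of $\nabla u$ in $L^p$ using \eqref{lplinfty}, and bound $\nabla^k p$ factors in $L^2$ using the elliptic estimates \eqref{ellplow}--\eqref{ellp} from Lemma \ref{coerlem}, which produce $\sum_k \E_k$ times a factor of the form $\|\nabla p\|_{L^\infty(\pa\Omega)}+\|\nabla u\|_{L^\infty(\Omega)}^2$; the $\|\nabla p\|_{L^\infty(\pa\Omega)}$ piece is again handled by the bound above. In the high regularity regime $r\geq 4$ the argument simplifies: since $\E_{r-1},\dots,\E_0$ control strictly more regularity, all intermediate-order factors of $u$ and $p$ can be bounded in $L^\infty$ by Sobolev embedding from the lower energies, leaving only a single top-order factor to be bounded in $L^2$ by $\E_r^{1/2}$.

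The main technical obstacle is the bookkeeping in the low-regularity case $r=2,3$: since $\nabla^{r-2}\Delta D_t p$ is formally cubic in $\nabla u$ (via \eqref{dtprespois}) but $\E_2$ alone does not control $\|\nabla u\|_{L^\infty}$ by Sobolev embedding in $\R^3$, each appearance of $\nabla p$ or of $(\nabla u)^2$ must be split so that exactly the promised power of $\|\nabla u\|_{L^\infty(\Omega)}$ comes out, with the leftover factor picking up either $\A$ (via \eqref{lplinfty}) or an energy. The estimate $\|\nabla p\|_{L^\infty(\Omega)}\leq C(K)\|\nabla u\|_{L^\infty}\A$ rather than the naive $\|\nabla u\|_{L^\infty}^2$ is the crucial input making this work. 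The high-$r$ case then follows by standard induction using the same elliptic estimates together with Sobolev embeddings of $\E_{r-1},\dots,\E_0$.
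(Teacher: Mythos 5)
Your overall plan mirrors the paper's: decompose $\nabla^{r-2}\Delta D_t p$ via \eqref{hotdtp}, place one factor of $\nabla u$ in $L^\infty$, and use the crucial observation that $\|\nabla p\|_{L^\infty(\Omega)}$ should be bounded by $C(K)\|\nabla u\|_{L^\infty(\Omega)}\A$ rather than by $\|\nabla u\|_{L^\infty}^2$, by combining the elliptic estimate \eqref{lpest} for $\Delta p = -(\nabla u)^2$ with Sobolev embedding and \eqref{lplinfty}. The principal term and the $r\geq 4$ case are handled along essentially the same lines as the paper (your route through $\Delta u = -\curl\omega$ and $K_r$ is a harmless variant of the paper's direct bound $\|\nabla^{r-2}\Delta u\|_{L^2}\leq\|\nabla^r u\|_{L^2}$).

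However, there is a genuine gap in your treatment of the mixed terms $(\nabla^{1+s}u)(\nabla^{r-s}p)$ for $r=2,3$. You propose to bound $\|\nabla^{r-s} p\|_{L^2}$ via the energy elliptic estimates \eqref{ellplow}--\eqref{ellp} of Lemma~\ref{coerlem}, and you correctly note that those produce a factor $\|\nabla p\|_{L^\infty(\pa\Omega)} + \|\nabla u\|_{L^\infty(\Omega)}^2$. You handle the $\|\nabla p\|_{L^\infty(\pa\Omega)}$ piece, but the $\|\nabla u\|_{L^\infty(\Omega)}^2$ piece remains: after multiplying by the $\|\nabla u\|_{L^\infty}$ you already extracted and squaring, you would arrive at a term $\|\nabla u\|_{L^\infty(\Omega)}^4\sum_k\E_k$, which is \emph{not} controlled by the target $\|\nabla u\|_{L^\infty(\Omega)}^2\A^2\sum_k\E_k$, since $\|\nabla u\|_{L^\infty(\Omega)}$ (an interior quantity) is not dominated by $\A$. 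The whole point of the lemma --- and the reason the paper does \emph{not} invoke \eqref{ellplow} here --- is to keep the power of $\|\nabla u\|_{L^\infty(\Omega)}$ at exactly two. The paper instead bounds $\|\nabla^2 p\|_{L^2(\Omega)}$ directly via the $L^2$ elliptic estimate \eqref{lpest} applied to $\Delta p = -(\nabla u)^2$, getting $\|\nabla^2 p\|_{L^2}\leq C(K)\|\nabla u\|_{L^4}^2\leq C(K,\Vol\Omega)\A^2$ by \eqref{lplinfty}, with no extra $\|\nabla u\|_{L^\infty}$ appearing. You should replace the appeal to Lemma~\ref{coerlem} with this direct elliptic bound for the $\nabla^{r-s}p$ factors when $r=2,3$; the same principle (always go through $\Delta p$ and \eqref{lplinfty} rather than the energy-based pressure estimates) should also be made explicit in your bookkeeping for the cubic terms.
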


\begin{proof}
  Using \eqref{hotdtp}, we need to control:
  \begin{align}
   &||(\nabla^{r-2} \Delta u)(\nabla p)||_{L^2(\Omega)},\label{drp1}\\
   &|| (\nabla^{1+s} u)(\nabla^{r-s} p)||_{L^2(\Omega)}, &&
    s = 0,..., r-2\label{drp2}\\
    &||(\nabla^{1+r_1} u) (\nabla^{1+r_2} u) (\nabla^{1+ r_3}u)
    ||_{L^2(\Omega)}, && r_1 + r_2 + r_3 = r-2.
   \label{drp3}
  \end{align}

We first control \eqref{drp1} by $||\nabla^{r} u||_{L^2(\Omega)}
||\nabla p||_{L^\infty(\Omega)}.$
 When $r \leq 3$, we use Sobolev embedding
 \eqref{intsob2} and the elliptic estimate \eqref{lpest} with
 Lebsegue exponent $p = 3$:
 \begin{align}
  ||\nabla p||_{L^\infty(\Omega)}
  &\leq C(K)\big( ||\nabla p||_{L^3(\Omega)} +
  ||\nabla^2 p||_{L^3(\Omega)}\big)\\
  &\leq C(K)||\nabla u||_{L^\infty(\Omega)} ||\nabla u||_{L^3(\Omega)}.
  \label{}
 \end{align}
 By \eqref{lplinfty}, we have $||\nabla u||_{L^3(\Omega)}
 \leq C(K, Vol(\Omega)) \A.$ When $r \geq 4$, we can instead
 use the Sobolev inequality \eqref{intsob2} with $k = p = 2$ and the estimate \eqref{ellp}:
 \begin{align}
  ||\nabla p||_{L^\infty(\Omega)}^2
  &\leq C(K, Vol(\Omega)) \sum_{k = 1}^3 ||\nabla^k p||_{L^2(\Omega)}^2\\
  &\leq C(K, Vol(\Omega)) \big( ||\nabla p||_{L^\infty(\pa \Omega)} +
  ||\nabla u||_{L^\infty(\Omega)}^2\big)\E_3
  \label{yetanother}
 \end{align}

 Next, to control \eqref{drp2}, when $r = 2$ it is bounded by:
 \begin{align}
   ||\nabla u||_{L^\infty(\Omega)} ||\nabla^2 p||_{L^2(\Omega)}
   &\leq C(K) ||\nabla u||_{L^\infty(\Omega)} ||\nabla u||_{L^4(\Omega)}^2
   \leq ||\nabla u||_{L^\infty(\Omega)} \A^2,
  \label{anotherlow}
 \end{align}
 and when $r = 3$ we instead bound it by:
 \begin{align}
  ||\nabla u||_{L^\infty(\Omega)} ||\nabla^2 p ||_{L^2(\Omega)}
  + ||\nabla^2 u||_{L^2(\Omega)} ||\nabla p||_{L^\infty(\Omega)},
  \label{}
 \end{align}
 and then control the first term as in \eqref{anotherlow} and use
 \eqref{yetanother} to bound $||\nabla p||_{L^\infty(\Omega)}$.

For $r \geq 4$,
  we use the interpolation inequality
 \eqref{intinterpu}:
 \begin{equation}
   ||(\nabla^{1+s} u)(\nabla^{r-s} p)||_{L^2(\Omega)}
   \leq ||\nabla u||_{L^\infty(\Omega)} \sum_{k= 1 }^{r}
   ||\nabla^{k} p||_{L^2(\Omega)}
   + ||\nabla p||_{L^\infty(\Omega)}
   \sum_{k = 1}^{r} ||\nabla^k u||_{L^2(\Omega)}.
  \label{}
 \end{equation}
 These terms can be bounded by the right-hand side of \eqref{dtphigh} by arguing as
 above and using \eqref{ellp}.

 Finally we bound \eqref{drp3}. For $r = 2$ we use \eqref{lplinfty}
 and Sobolev embedding \eqref{intsob2}:
 \begin{align}
  ||\nabla u||_{L^\infty(\Omega)} ||\nabla u||^2_{L^4(\Omega)}
  &\leq C(K, \Vol \Omega) ||\nabla u||_{L^\infty(\Omega)} \A
  \big( ||\nabla u||_{L^2(\Omega)} + ||\nabla^2 u||_{L^2(\Omega)}\big)\\
&\leq C(K, \Vol \Omega)||\nabla u||_{L^\infty(\Omega)} \A\,\big( \E_2
+ \E_1\big),
  \label{}
 \end{align}
 and for $r = 3$ the same strategy gives that \eqref{drp3} is bounded
 by:
 \begin{align}
  ||\nabla u||_{L^\infty(\Omega)}
  ||\nabla u||_{L^4(\Omega)} ||\nabla^2 u||_{L^4(\Omega)}
  \leq C(K, \Vol \Omega) ||\nabla u||_{L^\infty(\Omega)} \A \, \big(\E_3
  + \E_2\big).
  \label{}
 \end{align}
 When $r \geq 4$, we use the interpolation inequality \eqref{intinterpu}
 and Sobolev embedding \eqref{intsob2} to bound it by:
 \begin{align}
  ||\nabla u||_{L^\infty(\Omega)}^2 \sum_{k = 0}^{r-1}
  ||\nabla^k u||_{L^2(\Omega)}
  &\leq C(K) ||\nabla u||_{L^\infty(\Omega)} \bigg(
  \sum_{k = 0}^3 ||\nabla^k u||_{L^2(\Omega)}\bigg)
\bigg(\sum_{k = 0}^{r-1}
  ||\nabla^k u||_{L^2(\Omega)}\bigg)\\
  &\leq C(K) ||\nabla u||_{L^\infty(\Omega)} \bigg(\sum_{s = 0}^{r-1} \E_{s}
  \bigg)^2
  \label{}
 \end{align}

\end{proof}

Combining the previous two results, we have:

\begin{cor}
  With $\A$ defined by \eqref{Adef}, for $r = 2,3$:
 \begin{equation}
  ||\Pi \nabla^r D_t p||^2_{L^2(\pa \Omega)}
  + ||\nabla^{r-1} D_t p||^2_{L^2(\pa \Omega)}
  \leq C(K, \Vol(\Omega))(||\nabla u||^2_{L^\infty(\Omega)} + 1)\A
  \E_3
  \label{projlow}
 \end{equation}
 and for $r \geq 4$, there is a polynomial $P$ so that:
 \begin{equation}
  ||\Pi \nabla^r D_t p||^2_{L^2(\pa \Omega)}
  + ||\nabla^{r-1} D_t p||^2_{L^2(\pa \Omega)}
  \leq C(K, \Vol(\Omega))(||\nabla u||^2_{L^\infty(\Omega)} + 1)\A
  \,\bigg(\E_r + P(\E_{r-1})\bigg)
  \label{projhigh}
 \end{equation}
\end{cor}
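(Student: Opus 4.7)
The plan is to apply the projection-type elliptic estimates \eqref{projest}--\eqref{trace} and \eqref{basicproj2} of the previous section to the function $q = D_t p$, and then control the resulting $\|\nabla^{r-2}\Delta D_t p\|_{L^2(\Omega)}$ factors by Lemma \ref{dtpests} and the resulting $\|\tn^{r-2}\theta\|_{L^2(\pa\Omega)}$ factors by Lemma \ref{coerlem}.

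The key preliminary observation, which makes everything work, is that $D_t p \equiv 0$ on $\pa \Omega$. Indeed, in Lagrangian coordinates $\pa \Omega$ is fixed in time, and the free-boundary condition $p|_{\pa \D_t} = 0$ becomes $p(t,y) = 0$ for all $y \in \pa \Omega$ and all $t$, so $D_t p(t,y) = 0$ there as well. In particular every tangential derivative of $D_t p$ vanishes on $\pa \Omega$, and $\nabla D_t p = N(\nabla_N D_t p)$ there, which is what allows us to bring in the Dirichlet-type inequality \eqref{basicproj2} as well as the identity \eqref{nabla2}.

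For $r = 2$ I would begin with the pointwise formula \eqref{nabla2} applied to $q = D_t p$, which gives $\|\Pi \nabla^2 D_t p\|_{L^2(\pa\Omega)} \leq \|\theta\|_{L^\infty(\pa\Omega)} \|\nabla_N D_t p\|_{L^2(\pa\Omega)}$, and then use \eqref{basicproj2} to bound both this and $\|\nabla D_t p\|_{L^2(\pa\Omega)}$ by $C(K)\|\Delta D_t p\|_{L^2(\Omega)}$. Applying the $r = 2$ case of Lemma \ref{dtpests} converts this into the right-hand side of \eqref{projlow}. For $r = 3$ I would instead apply \eqref{projest} and \eqref{trace} to $q = D_t p$; the factor $\|\tn \theta\|_{L^2(\pa\Omega)}$ is controlled by \eqref{thetaless} of Lemma \ref{coerlem}, the $L^2(\pa\Omega)$ factors involving $\nabla_N D_t p$ and the lower-order $\|\nabla^k D_t p\|_{L^2(\pa\Omega)}$ are handled by \eqref{basicproj2} together with the trace inequality \eqref{basictrace}, and the interior factor $\|\nabla \Delta D_t p\|_{L^2(\Omega)}$ is handled by the $r = 3$ case of Lemma \ref{dtpests}.

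The main obstacle in the $r = 3$ case, and the step I would think most carefully about, is the $L^\infty(\pa\Omega)$ factor of $\nabla_N D_t p$ that appears in the right-hand side of \eqref{projest}. To handle it I would apply the $L^p$ interior elliptic estimate \eqref{lpest} to the Dirichlet problem \eqref{dtprespois} with $p > 3$, so that Sobolev embedding promotes $\nabla D_t p$ to $L^\infty(\Omega)$ and hence restricts to an $L^\infty(\pa\Omega)$ bound on $\nabla_N D_t p$, and then estimate the $L^p$ norm of the right-hand side of \eqref{dtprespois} using precisely the interpolation, Sobolev, and \eqref{lplinfty} arguments already developed in the proof of Lemma \ref{dtpests}. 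For $r \geq 4$ the template is identical but iterated: I would use the high-$r$ versions of \eqref{projest}, \eqref{trace}, \eqref{thetaannoying2}, and \eqref{dtphigh}, absorbing all the resulting products of $\E_k$ with $k \leq r-1$ into the polynomial $P(\E_{r-1},\ldots,\E_0)$; at this regularity the $L^\infty(\pa\Omega)$ bound on $\nabla_N D_t p$ is easier to obtain because more derivatives of $D_t p$ are available for Sobolev embedding.
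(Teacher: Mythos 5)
Your overall strategy --- apply the projection estimates \eqref{projest}, \eqref{trace}, and \eqref{basicproj2} to $q = D_t p$ (using that $D_t p = 0$ on $\pa\Omega$), and then feed in Lemmas \ref{dtpests} and \ref{coerlem} --- matches the paper for most of the steps, and your variant of the $r = 2$ case (H\"older with $\|\theta\|_{L^\infty(\pa\Omega)}$ against $\|\nabla_N D_t p\|_{L^2(\pa\Omega)}$ so that everything reduces to $\|\Delta D_t p\|_{L^2(\Omega)}$ via \eqref{basicproj2}) is sound.

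The genuine gap is in your proposed handling of the factor $\|\nabla_N D_t p\|_{L^\infty(\pa\Omega)}$ that enters through \eqref{projest} for $r \geq 3$. The paper never tries to bound this quantity by $\A$ or by elliptic regularity: it is retained as a standalone coefficient that survives into Proposition \ref{enestprop} (it appears explicitly in \eqref{enlow}--\eqref{enhigh}) and then into the continuation criterion \eqref{blowup} of Theorem \ref{mainthm}. Your proposal to eliminate it using the $W^{2,p}$ estimate \eqref{lpest} applied to \eqref{dtprespois} with $p > 3$ runs into a structural obstruction: the term $p_3 = -(\Delta u^e)\nabla_e p$ contains $\Delta u = -\curl\omega$, a full derivative of the vorticity, which is \emph{not} controlled by $\A$ (which only records $\|\omega\|_{L^\infty}$). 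To put $\curl\omega$ into $L^p$ with $p > 3$ one must invoke Sobolev embedding from $\nabla^2\omega \in L^2$, at the cost of a factor of $\sqrt{\E_3}$. When this $\sqrt{\E_3}$ multiplies $\|\tn^{r-2}\theta\|_{L^2(\pa\Omega)} \lesssim \sqrt{\E_r}$ from the leading term of \eqref{projest}, the resulting bound on $\|\Pi\nabla^r D_t p\|^2_{L^2(\pa\Omega)}$ becomes at least quadratic in $\E_r$ rather than linear, which contradicts the form of \eqref{projlow}--\eqref{projhigh} and would break the log-Gronwall argument in the proof of Theorem \ref{mainthm}. The correct move is to leave $\|\nabla_N D_t p\|_{L^\infty(\pa\Omega)}$ as an assumed control quantity, exactly as the paper does.
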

\begin{proof}

  The $r = 2$ case follows by first applying \eqref{trace}
and \eqref{dtplow}:
\begin{align}
 ||\nabla D_t p||_{L^2(\pa \Omega)}^2
 \leq C(K, \Vol(\Omega)) ||\Delta D_t p||_{L^2(\Omega)}^2
 &\leq C(K, \Vol(\Omega)) ||\nabla u||_{L^\infty(\Omega)}^2 \big(||\omega||_{L^\infty(\Omega)}^2
 + ||\tn u||^2_{L^\infty(\pa \Omega)}\big)\E_3\\
 &\leq C(K, \Vol(\Omega))||\nabla u||_{L^\infty(\Omega)}
 \A^2 \, \E_3,
 \label{r21}
\end{align}
and then applying \eqref{nabla2}:
\begin{align}
 ||\Pi \nabla^2 D_t p||_{L^2(\pa \Omega)}
 &\leq  ||\theta||_{L^2(\pa \Omega)} ||\nabla_N D_t p||_{L^\infty(\pa \Omega)}.
 \label{r22}
\end{align}

When $r = 3$, we have:
\begin{align}
 ||\nabla^2 D_t p||_{L^2(\pa \Omega)}^2
 &\leq C(K, \Vol(\Omega))\bigg( ||\Pi \nabla^2 D_t p||_{L^2(\pa \Omega)}^2
 + ||\Delta D_t p||_{L^2(\Omega)}^2 + ||\nabla \Delta D_t p||_{L^2(\Omega)}^2\bigg),
 \label{}
\end{align}
and using \eqref{r22}, \eqref{r21} and \eqref{dtplow}, the right-hand side is bounded
by \eqref{projlow}.

Using \eqref{projest}, we also have:
\begin{align}
 ||\Pi \nabla^3 D_t p||_{L^2(\pa \Omega)}
 &\leq C(K) \bigg(||\tn \theta||_{L^2(\pa \Omega)} ||\nabla_N D_t p||_{L^\infty(\pa \Omega)}
 + ||\nabla D_t p||_{L^2(\pa \Omega)} + ||\nabla^2 D_t p||_{L^2(\pa \Omega)}\\
 &+ \big( ||\tn \theta||_{L^2(\pa \Omega)} + ||\theta||_{L^2(\pa \Omega)}
 + ||\theta||_{L^\infty(\pa \Omega)}\big) ||\nabla D_t p||_{L^\infty(\pa \Omega)}\bigg),
 \label{}
\end{align}
Using \eqref{nabla2} and \eqref{tn1} with $q = p$ and then
\eqref{ellp},
this is bounded by the right-hand side
of \eqref{projhigh}.

For $r \geq 4$, the argument is the same, except we use the bound \eqref{dtphigh}
in place of the bound \eqref{dtplow}.
\end{proof}

  \begin{proof}[Proof of Proposition \ref{enestprop}]
    By the Reynolds transport theorem,
    \begin{equation}
    \frac{1}{2} \frac{d}{dt} \int_{\D_t}
     |\nabla^{r-1} \omega(t)|^2 \, dx
     = \int_{\D_t} D_t \nabla^{r-1} \omega \cdot
     \nabla^{r-1} \omega \, dx.
     \label{}
    \end{equation}
    By \eqref{hotdeul} and the interpolation inequality \eqref{intinterpu}, this gives:
    \begin{equation}
      \frac{d}{dt} K_r(t) \leq
      C(K) ||\nabla u||_{L^\infty(\D_t)} \E_r.
     \label{}
    \end{equation}

    To control the time derivative of $E_r$ (defined in \eqref{endef}),
    we use Proposition 5.12 of \cite{Christodoulou2000} with $\alpha = \nabla^r p$ and
    $\beta = \nabla^{r-1} u$ and $\nu = (-\nabla p)^{-1}$.
    This gives:
    \begin{align}
     \frac{d}{dt} E_r(t)
     &\leq C \sqrt{E_r}\big(
     ||\Pi (D_t \nabla^r p - (\nabla p) N^k \nabla^{r} u_k)||_{L^2(\pa \Omega)}
     + ||D_t \nabla^r u + \nabla \nabla^{r} p||_{L^2(\Omega)}\big)\\
     &+ CK E_r + C\sqrt{E_r}||\curl \nabla^{r-1} u||_{L^2(\Omega)}
     + \big(K ||\nabla^r p||_{L^2(\Omega)} + ||\curl \nabla^{r-1} u||_{L^2(\Omega)}\big)^2.
     \label{dtE}
    \end{align}
    By Lemma \ref{coerlem}, the terms on the second line are all bounded by
    $C(K,\Vol(\Omega)) \A \E_r$.
    Also, using \eqref{hotdeul}, the interpolation inequality \eqref{intinterpu}
    and Lemma \ref{coerlem}, the second term in \eqref{dtE} is bounded
    by the right-hand side of \eqref{enlow} (resp. \eqref{enhigh}).
It remains to control the first term in \eqref{dtE}.
Using \eqref{hotpbdy}:
    \begin{equation}
     || \Pi (D_t \nabla^r p + (\nabla^r u)\cdot \nabla p)||_{L^2(\pa \Omega)}
     \leq C\bigg(||\Pi \nabla^r D_t p||_{L^2(\pa \Omega)}
     + \sum_{s = 0}^{r-2} ||\Pi\big( (\nabla^{1+s} u)\cdot
     \nabla^{r-s} p\big)||_{L^2(\pa \Omega)}\bigg).
     \label{}
    \end{equation}
    By \eqref{projlow} (resp. \eqref{projhigh}), the first term here is bounded
    by the right-hand side of \eqref{enlow} (resp. \eqref{enhigh}).

    To control $||\Pi (\nabla^{1+s} u) \cdot
    (\nabla^{r-s} p)||_{L^2(\pa \Omega)}$ for $s = 0,..., r-2,$
     we note that when $r = 2$ the result is bounded by:
     \begin{align}
       ||\nabla u||_{L^\infty(\pa \Omega)} ||\nabla^2 p||_{L^\infty(\pa \Omega)},
      \label{bdyinterploww}
     \end{align}
     and for $r = 3$, the result is bounded by:
    \begin{equation}
     || \nabla u||_{L^\infty(\pa \Omega)} ||\nabla^3 p||_{L^2(\pa \Omega)}
     + ||\nabla^2 u||_{L^2(\pa \Omega)} ||\nabla^2 p||_{L^\infty(\pa \Omega)}.
     \label{bdyinterplow}
    \end{equation}
    By the pointwise estimate \eqref{pwlem}, the equation \eqref{prespois}
    and \eqref{nabla2}, we have:
    \begin{align}
     ||\nabla^2 p ||_{L^\infty(\pa \Omega)}
     \leq C \big(||\nabla u||_{L^\infty(\pa \Omega)}^2 +
     ||\theta||_{L^\infty(\pa \Omega)} ||\nabla_N p||_{L^\infty(\pa \Omega)}\big),
     \label{}
    \end{align}
    and combining this with the trace inequality \eqref{basictrace} and Lemma
    \ref{coerlem} shows that \eqref{bdyinterploww} and
    \eqref{bdyinterplow} are controlled by the
    right-hand side of \eqref{enlow}.

    To control $||\Pi (\nabla^{1+s} u)\cdot \nabla^{r-s}p)||_{L^2(\pa \Omega)}$
    for $r \geq 4$, we note that we could use Sobolev embedding, the trace inequality
    \eqref{basictrace} and Lemma \ref{coerlem} to show that this is bounded
    by interior terms, but this would lead to estimates that are not linear
    in the highest order norm $\E_r$. The idea from \cite{Christodoulou2000}
    is to use the fact that because
    of the prescence of the projection $\Pi$, the derivatives $\nabla^{1+s},
    \nabla^{r-s}$ are nearly tangential derivatives and for tangential
    derivatives we can use the interpolation inequality \eqref{bdyinterp} to
    control the intermediate terms. Arguing
    as in (7.25) in \cite{Christodoulou2000} this gives:
\begin{align}
 || \Pi &\big( (\nabla^{1+s} u) \cdot (\nabla^{r-s} p) \big)||_{L^2(\pa \Omega)}\\
 &\leq C(K)\bigg( ||\nabla u||_{L^\infty(\pa \Omega)} + \sum_{k = 0}^{r-2}
 ||\nabla^k u||_{L^2(\pa \Omega)}\bigg)
 ||\nabla^r p||_{L^2(\pa \Omega)}\\
 &+ C(K)\bigg( ||\nabla^2 p||_{L^\infty(\pa \Omega)} +
 \sum_{k = 0}^{r-1} ||\nabla^k p||_{L^2(\pa \Omega)}\bigg)
 ||\nabla^{r-1} u||_{L^2(\pa \Omega)}\\
 &+
 C(K) \bigg( ||\theta||_{L^\infty(\pa \Omega)}
 + \sum_{k = 0}^{r-2} ||\tn^{k} \theta||_{L^2(\pa \Omega)}\bigg)
 \bigg( ||\nabla u||_{L^\infty(\pa \Omega)} + \sum_{k = 0}^{r-2}
 ||\nabla^k u||_{L^2(\pa \Omega)}\bigg)\\
 &\times \bigg( ||\nabla^2 p||_{L^\infty(\pa \Omega)} +
 \sum_{k = 0}^{r-1} ||\nabla^k p||_{L^2(\pa \Omega)}\bigg)
 \label{bdyinterineq}
\end{align}
Using the Sobolev estimates \eqref{bdysob2}, \eqref{intsob2}
along with Lemma \ref{coerlem}
and arguing as above proves \eqref{enhigh}.
  \end{proof}

\appendix
\section{Sobolev Estimates}

Here we collect the various Sobolev embeddings that we will
rely on. These are all well-known, but what is important is that the
constants in the various inequalities depend only on bounds for the second
fundamental form and the volume of $\D_t$ (which is constant if
\eqref{mass} holds).
  The proofs of these theorems with these constants
  appear in the appendix to \cite{Christodoulou2000}.
  \subsection{Interpolation inequalities}
  We will require interpolation inequalities both on $\pa \D_t$
  and $\D_t$.

  \begin{lemma}
    Suppose that:
   \begin{equation}
    \frac{m}{s} = \frac{k}{p} + \frac{m-k}{q},
    2 \leq p \leq s \leq q \leq \infty,
    \label{}
   \end{equation}
   and let $a = k/m$. Then there is a constant $C$ depending only
   on $m$ so that for any $(0,r)$ tensor $\alpha$:
   \begin{equation}
    ||\tn^k \alpha||_{L^s(\pa \D_t)} \leq C||\alpha||_{L^q(\pa \D_t)}^{1-a}
    ||\tn^m \alpha||_{L^p(\pa \D_t)}^a.
    \label{bdyinterp}
   \end{equation}
   In addition,  if $\iota_0 \geq \frac{1}{K}$, then:
  \begin{equation}
     \sum_{j = 0}^k ||\nabla^j \alpha||_{L^s(\D_t)}
     \leq C||\alpha||_{L^q(\D_t)}^{1-a}\bigg(\sum_{i = 0}^m
     ||\tn^i \alpha||_{L^p(\D_t)} K^{m-i}\bigg)^a.
     \label{intinterp}
    \end{equation}
    In particular, if $\ell + m = k$ then:
    \begin{equation}
      ||\tn^\ell \alpha \tn^m \beta||_{L^2(\pa\D_t)}
      \leq C\bigg(||\alpha||_{L^\infty(\pa \D_t)} \sum_{\ell = 0}^k
      ||\tn^\ell \beta||_{L^2(\pa \D_t)}
      + ||\beta||_{L^\infty(\pa \D_t)} \sum_{\ell = 0}^k
      ||\tn^\ell \alpha||_{L^2(\pa \D_t)}\bigg)
      \label{bdyinterpu}
    \end{equation}
    and
    \begin{equation}
      ||\nabla^\ell \alpha \nabla^m \beta||_{L^2(\D_t)}
      \leq C(K)\bigg(||\alpha||_{L^\infty(\D_t)} \sum_{\ell = 0}^k
      ||\nabla^\ell \beta||_{L^2( \D_t)}
      + ||\beta||_{L^\infty(\D_t)} \sum_{\ell = 0}^k
      ||\nabla^\ell \alpha||_{L^2(\D_t)}\bigg)
      \label{intinterpu}
    \end{equation}
  \end{lemma}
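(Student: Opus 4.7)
The plan is to reduce both interpolation inequalities \eqref{bdyinterp} and \eqref{intinterp} to the standard Euclidean Gagliardo--Nirenberg inequality via a partition of unity adapted to coordinate charts in which the metric is uniformly comparable to the Euclidean one, with the comparison constants and all derivatives of the metric components controlled solely by $K$. Granting these, the product estimates \eqref{bdyinterpu} and \eqref{intinterpu} should follow immediately from H\"older's inequality and Young's inequality.

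For \eqref{bdyinterp}, the first step is to use the bounds $|\theta|\leq K$ and $\iota_0\geq 1/K$ to cover $\pa\D_t$ by a controlled number of geodesic balls of radius $r\sim 1/K$, using as coordinates the tangent-plane projection (or equivalently the exponential map on $\pa\D_t$). On each such chart the components of the induced metric $\gamma$ and its inverse are bounded by a constant depending only on $K$, and $|\pa^\ell \gamma|\leq C K^\ell$ for all $\ell$. In these coordinates each tangential derivative $\tn$ differs from an ordinary partial by a Christoffel-symbol correction of size $\lesssim K$; hence the pointwise expansion $\tn^k\alpha = \pa^k \alpha + (\text{lower order in $\tn$})$ lets one apply the standard Euclidean Gagliardo--Nirenberg inequality on the chart and then sum over a subordinate partition of unity. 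The lower-order Christoffel terms produce $K$-weighted lower-order contributions, but the interpolation scaling balances these so they can be absorbed in the right-hand side. For \eqref{intinterp} one does the same using two kinds of charts: Euclidean balls in the interior region $d(y,\pa\D_t)>\iota_0/4$, and Fermi (``boundary-normal'') charts $(y',d)$ on the collar $d<\iota_0/2$, constructed from the normal exponential map, which is a diffeomorphism up to $d=\iota_0/2$ by the injectivity-radius assumption. In those boundary charts the metric in the form $g = dd^2 + \gamma_{ab}(y',d)\, dy^a dy^b$ has derivatives of $\gamma$ controlled by $K$ via the Jacobi/Riccati equation driven by $\theta$, so once more the reduction to Euclidean Gagliardo--Nirenberg goes through, with $\tn$ corresponding to the tangential derivatives in $y'$ and the factor $K^{m-i}$ absorbing dimensional mismatches when lower-order tangential derivatives are substituted for top-order full derivatives.

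Given the two interpolation inequalities, \eqref{bdyinterpu} and \eqref{intinterpu} follow by splitting the product via H\"older, $\|\tn^\ell\alpha\,\tn^m\beta\|_{L^2}\leq \|\tn^\ell\alpha\|_{L^{2k/\ell}}\|\tn^m\beta\|_{L^{2k/m}}$ (with $\ell+m=k$), then applying \eqref{bdyinterp} (respectively \eqref{intinterp}) with $q=\infty$, $p=2$, and intermediate order $\ell$ or $m$; the resulting products of fractional powers $\|\alpha\|_{L^\infty}^{1-\ell/k}\|\tn^k\alpha\|_{L^2}^{\ell/k}\cdot\|\beta\|_{L^\infty}^{1-m/k}\|\tn^k\beta\|_{L^2}^{m/k}$ are bounded by the symmetric right-hand side via Young's inequality $ab\leq \tfrac{a^r}{r}+\tfrac{b^{r'}}{r'}$ with $r=k/\ell$, $r'=k/m$.

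The genuinely delicate step is the chart construction underlying the reductions above, namely proving that the bounds $|\theta|\leq K$ and $\iota_0\geq 1/K$ suffice to produce coordinates of fixed size $\sim 1/K$ in which the metric and its derivatives are controlled only by $K$; this is a Jacobi-field argument carried out in the appendix to \cite{Christodoulou2000}, whose conclusions we are invoking. Once that geometric input is in place, everything else reduces to standard Euclidean interpolation and H\"older estimates.
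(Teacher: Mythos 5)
Your reduction of \eqref{bdyinterpu} and \eqref{intinterpu} to the interpolation inequalities is correct and clean: the H\"older splitting $\|\tn^\ell\alpha\,\tn^m\beta\|_{L^2}\leq\|\tn^\ell\alpha\|_{L^{2k/\ell}}\|\tn^m\beta\|_{L^{2k/m}}$ (with the obvious modification when $\ell=0$ or $m=0$), followed by \eqref{bdyinterp} with $q=\infty$, $p=2$ and then Young's inequality with exponents $k/\ell$, $k/m$, gives exactly the stated bound. And you correctly identify that the paper itself proves nothing here — it simply cites the appendix of \cite{Christodoulou2000}, so your sketch is already more than the paper provides.

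The one genuinely problematic step is the chart-based proof of \eqref{bdyinterp} and \eqref{intinterp} themselves, specifically the assertion that $|\theta|\leq K$ and $\iota_0\geq 1/K$ give $|\pa^\ell\gamma|\leq CK^\ell$ \emph{for all} $\ell$. That is false: $|\theta|\leq K$ controls the metric in normal or Fermi coordinates only up to roughly $C^{1,1}$. Derivatives of the metric beyond second order are governed by $\tn^j\theta$, which the hypotheses do not bound. Since your plan is to convert $\tn^k\alpha$ to $\pa^k(\alpha_I)$ plus Christoffel corrections and then invoke Euclidean Gagliardo--Nirenberg componentwise, you would need control of $\Gamma$ and its derivatives up to order $m-2$ whenever $m\geq 3$, i.e.\ bounds on $\tn^j\theta$ that are not available. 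Two further symptoms that this is the wrong route: a partition-of-unity argument would inevitably make the constant in \eqref{bdyinterp} depend on $K$ through overlap counts and chart sizes, whereas the lemma asserts a constant depending only on $m$; and the $K^{m-i}$ weights in \eqref{intinterp} have a structure suggesting a boundary/interior splitting rather than a pointwise Christoffel expansion. The actual proof in \cite{Christodoulou2000} (which you rightly defer to for the geometric input) is intrinsic: one integrates $\tn^a(\cdot)$ by parts on the closed surface $\pa\D_t$ so that no boundary terms appear, and the only commutations of covariant derivatives one ever needs produce the intrinsic curvature of $\pa\D_t$, which by the Gauss equation is quadratic in $\theta$ — bounded by $K^2$, with \emph{no} derivatives of $\theta$ — and these terms scale so as to be absorbed. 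That is both what gives the $m$-only constant in \eqref{bdyinterp} and what makes the hypotheses of the lemma sufficient; your chart-based route would require strengthening them.
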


  \subsection{Sobolev and Poincar\'e inequalities}
  \begin{lemma}
   Suppose that $1/\iota_0 \leq K$. Then for any $(0,r)$-tensor:
   \begin{align}
     ||\alpha||_{L^{2p/2-kp}(\pa \D_t)}
     &\leq C(K) \sum_{\ell = 0}^k ||\tn^\ell\alpha||_{L^p(\pa \D_t)},
     && 1 \leq p \leq \frac{2}{k},\label{bdysob1}\\
     ||\alpha||_{L^\infty(\pa \D_t)}
     &\leq C(K)\sum_{0 \leq \ell \leq k-1}
     ||\tn^\ell \alpha||_{L^p(\pa \D_t)}, && k > \frac{2}{p},
    \label{bdysob2}
   \end{align}
   and
   \begin{align}
     ||\alpha||_{L^{3p/3-kp}(\D_t)}
     &\leq C(K) \sum_{\ell = 0}^k ||\nabla^\ell\alpha||_{L^p(\D_t)},
     && 1 \leq p \leq \frac{3}{k},\label{intsob1}\\
     ||\alpha||_{L^\infty( \D_t)}
     &\leq C(K)\sum_{0 \leq \ell \leq k-1}
     ||\nabla^\ell \alpha||_{L^p(\D_t)}, && k > \frac{3}{p}.
    \label{intsob2}
   \end{align}
  \end{lemma}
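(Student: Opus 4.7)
The plan is to reduce each of \eqref{bdysob1}--\eqref{intsob2} to the corresponding standard Euclidean Sobolev or Morrey inequality, applied patchwise on a cover of $\pa \D_t$ (resp.\ $\D_t$) whose geometric parameters are uniformly controlled by $K$. The key point is that the hypotheses $|\theta| \leq K$ and $\iota_0^{-1} \leq K$ yield a radius $r_0 = r_0(K) > 0$ on which normal coordinates are well-defined and on which the metric, its Christoffel symbols, and tangential derivatives thereof are bounded purely in terms of $K$.

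For the boundary inequalities \eqref{bdysob1}--\eqref{bdysob2}, I would first cover $\pa \D_t$ by normal coordinate charts $\varphi_j: B(0, r_0) \subset \R^2 \to \pa \D_t$ of radius $r_0 = r_0(K)$. In each chart the induced metric $\gamma_{ab}$ is comparable to $\delta_{ab}$ with constants depending only on $K$, and the Christoffel symbols are controlled by derivatives of $\theta$, hence by $K$. A partition of unity $\{\chi_j\}$ subordinate to this cover can be chosen with $|\tn^\ell \chi_j| \leq C(K)$ for $\ell \leq k$. Then each $\chi_j \alpha$ pulls back to a compactly supported tensor on $B(0, r_0)$, to which the Euclidean Gagliardo--Nirenberg--Sobolev inequality (for \eqref{bdysob1}) or Morrey inequality (for \eqref{bdysob2}) applies. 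Summing over $j$ and applying the Leibniz rule to $\tn^\ell(\chi_j \alpha)$ produces only lower-order norms of $\alpha$ with constants depending on $K$, which are absorbed into the $\ell \leq k$ sum on the right-hand side.

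For the interior inequalities \eqref{intsob1}--\eqref{intsob2}, I would cover $\D_t$ in two pieces: a boundary collar $\{y \in \D_t : \dist(y, \pa \D_t) < \iota_0/2\}$ parametrized by the normal exponential map $(x, s) \mapsto x + sN(x)$ with $x \in \pa \D_t$, $s \in (0, \iota_0/2)$, together with the complementary interior region, which I cover by Euclidean balls of radius $\sim 1/K$. In the collar, combining the boundary charts $\varphi_j$ with the $s$-coordinate gives local coordinates in which $\D_t$ looks like a Euclidean slab, and the Jacobian of the exponential map is comparable to $1$ up to $C(K)$ by \eqref{dgammabd}; the Morrey embedding \eqref{intsob2} can then be obtained by reflection across $\pa \D_t$, which is justified by the uniform exterior sphere condition of Lemma \ref{bdyreg}. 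The interior part is handled by classical Euclidean embeddings on balls. A partition of unity subordinate to this two-piece cover then assembles the global estimate, exactly as in the boundary case.

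The principal obstacle is keeping every constant $K$-dependent without any hidden dependence on finer geometric data: one must verify uniform $K$-bounds for the metric, Christoffel symbols, and Jacobians in every chart, for the partition of unity and its derivatives, and for the extension operator used in the reflection argument. All of these follow from the hypothesis $|\theta| + \iota_0^{-1} \leq K$ together with the bounds in \eqref{dgammabd}; once they are secured, the argument reduces to a bookkeeping application of the classical Euclidean embeddings, as carried out in the appendix of \cite{Christodoulou2000}.
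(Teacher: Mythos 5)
The paper itself does not prove this lemma; it is stated with a pointer to the appendix of \cite{Christodoulou2000}, so there is no internal proof to compare against. Nonetheless your sketch has a genuine gap that the cited argument avoids.

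The gap is the repeated assertion that the Christoffel symbols of $\gamma$ in normal coordinates, and the covariant derivatives $\tn^\ell \chi_j$ of the cutoffs, are ``bounded purely in terms of $K$.'' The hypothesis gives $|\theta| + \iota_0^{-1} \leq K$, which controls the curvature of $\pa\D_t$ (via Gauss) and the injectivity radius, hence gives $C^0$ comparability of $\gamma$ with $\delta$ on charts of radius $r_0(K)$. It does \emph{not} control $\tn\theta$, and therefore does not control the $C^1$ norm of $\gamma$ in geodesic normal coordinates (the $O(|x|^3)$ terms in the metric expansion carry $\nabla R \sim \tn\theta$). Consequently, the identification of $|\tn^\ell\alpha|$ with Euclidean $|\partial^\ell\alpha|$ in your charts, and the bounds $|\tn^\ell\chi_j| \leq C(K)$ for $\ell \geq 2$, are not justified. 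The same issue appears in the interior collar via \eqref{dgammabd}, which bounds $\nabla n$ by $\|\theta\|_{L^\infty}$ but says nothing about $\nabla^2 n$.

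The standard way to get constants depending only on $K$ is to avoid needing $C^1$ chart control entirely: first prove the $k=1$ inequality for scalars, where the constant is the Sobolev/isoperimetric constant of $(\pa\D_t,\gamma)$ (resp.\ $\D_t$), which does depend only on $|\theta|$, $\iota_0^{-1}$, and $\Vol\D_t$ --- e.g.\ by the Michael--Simon inequality or by your covering argument applied at the level of $W^{1,p}\hookrightarrow L^{p^*}$ only, where no Christoffel bounds are needed for scalars since $\tn f = \partial f$. Then pass from scalars to $(0,r)$-tensors via Kato's inequality $|\tn|\alpha|| \leq |\tn\alpha|$, giving $\|\alpha\|_{L^{p^*}} \leq C(K)(\|\alpha\|_{L^p} + \|\tn\alpha\|_{L^p})$. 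Finally obtain all the higher-$k$ cases, including the Morrey-type \eqref{bdysob2} and \eqref{intsob2}, by iterating this first-order estimate applied to $\alpha, \tn\alpha, \tn^2\alpha, \ldots$; each iteration keeps the dependence solely on $K$. This bypasses the partition-of-unity bookkeeping at higher derivative order and is, I believe, the route taken in the reference. Your reflection idea near the boundary is a fine way to handle the collar in the interior case, and Lemma \ref{bdyreg} does justify it, but that step was not where the issue lay.
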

  We will also need the following version of the Poincar\'e inequality, whose
  proof is also in \cite{Christodoulou2000}.
  \begin{lemma}
   If $q = 0$ on $\pa \D_t$ then:
   \begin{align}
    ||q||_{L^2(\D_t)} \leq C (Vol \D_t)^{1/3} ||\nabla q||_{L^2(\D_t)},
    \label{poin}\\
    ||\nabla q||_{L^2(\D_t)} \leq C (Vol \D_t)^{1/6}
    ||\Delta q||_{L^2(\D_t)}\label{poin2}.
   \end{align}
  \end{lemma}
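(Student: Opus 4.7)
The plan is to prove the two inequalities in sequence, using the first as an ingredient for the second.

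For \eqref{poin}, this is the classical Poincar\'e--Friedrichs inequality for functions with zero boundary trace. I would extend $q$ by zero to all of $\mathbb{R}^3$, which preserves $H^1$ regularity precisely because $q$ vanishes on $\pa \D_t$, and then apply the universal Sobolev embedding $\dot H^1(\mathbb{R}^3) \hookrightarrow L^6(\mathbb{R}^3)$ to get $\|q\|_{L^6(\D_t)} \leq C \|\nabla q\|_{L^2(\D_t)}$ with constant independent of the domain. Finally, H\"older's inequality yields
\[
 \|q\|_{L^2(\D_t)} \leq (\Vol \D_t)^{1/3} \|q\|_{L^6(\D_t)},
\]
giving \eqref{poin}. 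An equally good route is Faber--Krahn: the first Dirichlet eigenvalue of $-\Delta$ on $\D_t$ is bounded below by the eigenvalue of the ball of the same volume, which scales like $(\Vol \D_t)^{-2/3}$. Either way, the crucial observation is that no shape-dependent information (in particular no dependence on $K$) enters the constant.

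For \eqref{poin2}, the plan is to integrate by parts. Since $q$ vanishes on $\pa \D_t$,
\[
 \|\nabla q\|_{L^2(\D_t)}^2 = -\int_{\D_t} q\, \Delta q\, dx \leq \|q\|_{L^2(\D_t)}\,\|\Delta q\|_{L^2(\D_t)}.
\]
Inserting the bound from \eqref{poin} into the right-hand side and dividing through by $\|\nabla q\|_{L^2(\D_t)}$ produces an inequality of the form $\|\nabla q\|_{L^2(\D_t)} \leq C(\Vol \D_t)^\alpha \|\Delta q\|_{L^2(\D_t)}$; the precise exponent $\alpha$ is determined by homogeneity in this chain of estimates and in any case only influences the constant, which is harmless for the applications in Lemma \ref{coerlem}.

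I do not expect any real obstacle. Both inequalities are standard and the only point deserving a little care is isolating the dependence on the geometry. The upshot, as in the rest of the appendix and as emphasized in \cite{Christodoulou2000}, is that these constants depend only on $\Vol \D_t$ (which is conserved along the flow by \eqref{mass}) and not on the second fundamental form or the injectivity radius, so Lemma \ref{coerlem} and Proposition \ref{enestprop} can absorb them into their explicit $C(K, \Vol \Omega)$ factors without any circular dependence on higher-order quantities.
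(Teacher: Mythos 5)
Your proposal is correct, and it is the standard argument; the paper itself gives no proof (it defers to \cite{Christodoulou2000}), so there is no competing route to compare against. Two small remarks. First, the extension-by-zero step is clean here because any $H^1_0(\D_t)$ function extends by zero to an $H^1(\R^3)$ function with the same Dirichlet norm, with no regularity assumption on $\pa\D_t$ at all (by density of $C_c^\infty$), so your observation that no dependence on $K$ enters is right for an even more elementary reason than you state. Second, you leave the exponent $\alpha$ in \eqref{poin2} unspecified, but it is worth pinning down: running your chain,
\begin{equation}
 \|\nabla q\|_{L^2}^2 = -\int_{\D_t} q\,\Delta q
 \le \|q\|_{L^2}\|\Delta q\|_{L^2}
 \le C(\Vol\D_t)^{1/3}\|\nabla q\|_{L^2}\|\Delta q\|_{L^2},
\end{equation}
so after dividing one gets $\|\nabla q\|_{L^2}\le C(\Vol\D_t)^{1/3}\|\Delta q\|_{L^2}$, i.e.\ exponent $1/3$, not the $1/6$ stated in the lemma. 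A quick scaling check ($q(x)\mapsto q(\lambda x)$, under which $\|\nabla q\|_{L^2}\mapsto\lambda^{-1/2}\|\nabla q\|_{L^2}$, $\|\Delta q\|_{L^2}\mapsto\lambda^{1/2}\|\Delta q\|_{L^2}$, $\Vol\mapsto\lambda^{-3}\Vol$) confirms that $1/3$ is the only dimensionally consistent exponent, so the $1/6$ in \eqref{poin2} appears to be a typo. As you note this is harmless for the paper's purposes since $\Vol\D_t$ is conserved by \eqref{mass} and the volume factor is simply absorbed into the constants in Lemma~\ref{coerlem}.
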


\section*{Acknowledgements}
The author would like to thank Hans Lindblad for suggesting this problem
and for valuable comments on an early version of this manuscript, as well
as Theodore Drivas and Huy Nguyen for many helpful discussions.

%\bibliographystyle{abbrv}
%\bibliography{freebdy-mr}

\end{document}